\title [A new inequality on the Hodge number $h^{1,1}$ ]
{ A new inequality on the Hodge number $h^{1,1}$ of algebraic surfaces}
\author[Jun Lu]{Jun Lu}
\author[Sheng-Li Tan]{Sheng-Li Tan}
\author[Fei Yu]{Fei Yu}
\author[Kang Zuo]{Kang Zuo}
\address{Department of Mathematics, East China Normal University,   Dongchuan RD 500,
 Shanghai 200241, P. R. of China}
 \email{jlu@math.ecnu.edu.cn  }
 \address{Department of Mathematics, East China Normal University, Dongchuan RD 500,
 Shanghai 200241, P. R. of China}
 \email{sltan@math.ecnu.edu.cn}
\address{Department of Mathematics, Xiamen University, Xiamen
361005, P. R. of China} \email{vvyufei@gmail.com}
\address{Universit\"{a}t Mainz, Fachbereich 08-Physik Mathematik und Informatik, 55099 Mainz, Germany}
\email{zuok@uni-mainz.de}
\thanks{}
\begin{document}
%%%%%%%%%%%%%%%%%%%% Text italic %%%%%%%%%%%%%%%%%%%%%%%%%%%%
\theoremstyle{plain}
\newtheorem{thm}{Theorem}[section]
\newtheorem{theorem}[thm]{Theorem}
\newtheorem{addendum}[thm]{Addendum}
\newtheorem{lemma}[thm]{Lemma}
\newtheorem{corollary}[thm]{Corollary}
\newtheorem{proposition}[thm]{Proposition}
%%%%%%%%%%%%%%%%%%%% Text roman %%%%%%%%%%%%%%%%%%%%%%%%%%%%%
\theoremstyle{definition}
\newtheorem{remark}[thm]{Remark}
\newtheorem{remarks}[thm]{Remarks}
\newtheorem{notations}[thm]{Notations}
\newtheorem{definition}[thm]{Definition}
\newtheorem{claim}[thm]{Claim}
\newtheorem{assumption}[thm]{Assumption}
\newtheorem{assumptions}[thm]{Assumptions}
\newtheorem{property}[thm]{Property}
\newtheorem{properties}[thm]{Properties}
\newtheorem{example}[thm]{Example}
\newtheorem{examples}[thm]{Examples}
\newtheorem{conjecture}[thm]{Conjecture}
\newtheorem{questions}[thm]{Questions}
\newtheorem{question}[thm]{Question}
\numberwithin{equation}{section}
%%%%%%%% Special symbols %%%%%%%%%%%%%%%%%%%%%%%%%%%%%%%
% Skriptbuchstaben
\newcommand{\sA}{{\mathcal A}}
\newcommand{\sB}{{\mathcal B}}
\newcommand{\sC}{{\mathcal C}}
\newcommand{\sD}{{\mathcal D}}
\newcommand{\sE}{{\mathcal E}}
\newcommand{\sF}{{\mathcal F}}
\newcommand{\sG}{{\mathcal G}}
\newcommand{\sH}{{\mathcal H}}
\newcommand{\sI}{{\mathcal I}}
\newcommand{\sJ}{{\mathcal J}}
\newcommand{\sK}{{\mathcal K}}
\newcommand{\sL}{{\mathcal L}}
\newcommand{\sM}{{\mathcal M}}
\newcommand{\sN}{{\mathcal N}}
\newcommand{\sO}{{\mathcal O}}
\newcommand{\sP}{{\mathcal P}}
\newcommand{\sQ}{{\mathcal Q}}
\newcommand{\sR}{{\mathcal R}}
\newcommand{\sS}{{\mathcal S}}
\newcommand{\sT}{{\mathcal T}}
\newcommand{\sU}{{\mathcal U}}
\newcommand{\sV}{{\mathcal V}}
\newcommand{\sW}{{\mathcal W}}
\newcommand{\sX}{{\mathcal X}}
\newcommand{\sY}{{\mathcal Y}}
\newcommand{\sZ}{{\mathcal Z}}
% Sonderbuchstaben mit Doppellinie
\newcommand{\A}{{\mathbb A}}
\newcommand{\B}{{\mathbb B}}
\newcommand{\C}{{\mathbb C}}
\newcommand{\D}{{\mathbb D}}
\newcommand{\E}{{\mathbb E}}
\newcommand{\F}{{\mathbb F}}
\newcommand{\G}{{\mathbb G}}
\newcommand{\BH}{{\mathbb H}}
\newcommand{\I}{{\mathbb I}}
\newcommand{\J}{{\mathbb J}}
\newcommand{\BL}{{\mathbb L}}
\newcommand{\M}{{\mathbb M}}
\newcommand{\N}{{\mathbb N}}
\newcommand{\BP}{{\mathbb P}}
\newcommand{\Q}{{\mathbb Q}}
\newcommand{\R}{{\mathbb R}}
\newcommand{\BS}{{\mathbb S}}
\newcommand{\T}{{\mathbb T}}
\newcommand{\U}{{\mathbb U}}
\newcommand{\V}{{\mathbb V}}
\newcommand{\W}{{\mathbb W}}
\newcommand{\X}{{\mathbb X}}
\newcommand{\Y}{{\mathbb Y}}
\newcommand{\Z}{{\mathbb Z}}
%Sonstiges
\newcommand{\rk}{{\rm rk}}
\newcommand{\ch}{{\rm c}}
\newcommand{\Sp}{{\rm Sp}}
\newcommand{\Sl}{{\rm Sl}}
%\newcommand{\rounddown}[1]{\llcorner{#1}\lrcorner}
%\def\bigtimes{\mathop{\mbox{\Huge$\times$}}}
%%%%%%%%%%%%%%%%%%%%%%%%%%%%%%%%%%%%%%%%%%%%%%%%%%%%%%%%%%%%%%
\maketitle

\footnotetext[1]{This work was supported by the SFB/TR 45  Periods,
Moduli Spaces and Arithmetic of Algebraic Varieties  of the DFG
(German Research Foundation).}

\footnotetext[2]{The first and the second named authors are also
supported by NSFC, the Science  Foundation  of  the EMC  and the
Foundation of Scientific Program of Shanghai. }

\footnotetext[3]{The first author is also supported by the Fundamental Research Funds for the Central Universities. }

\footnotetext[4]{ {\itshape Key words and phrases.} Hodge number, Arakelov inequality, singular Jacobian, singular fiber. }
 \maketitle
\begin{abstract} We get a new inequality on the Hodge number $h^{1,1}(S)$ of fibred algebraic complex surfaces $S$,
which is a generalization of an inequality of Beauville. Our
inequality  implies the Arakelov type inequalities due to  Arakelov,
Faltings, Viehweg and Zuo, respectively.
\end{abstract}

\qquad
  \section{Introduction and main results}
For a compact complex K\"{a}hler surface $S$, we have several Hodge
numbers
$$h^{0,1}(S)=h^{1,0}(S)=q(S), \hskip0.5cm
h^{0,2}(S)=h^{2,0}(S)=p_g(S), \hskip0.5cm h^{1,1}(S).$$
 However, the
Hodge number $h^{1,1}$ is not well understood comparing with the
others. Lefschetz's (1,1)-theorem tells us that the N\'{e}ron-Severi
group  NS$(S) =H^{1,1}(S)\cap H^2(S,\mathbb Z)$. Denote by
$\rho(S)=\textrm{rank} {\rm\,NS}(S)$ the Picard number of $S$, i.e.,
the rank of NS$(S)$. Then we have $h^{1,1}(S)\geq \rho(S)$.

\begin{theorem}[\cite{BPV04}, Corollary~5.4]
If a compact K\"{a}hler surface $S$ does not admit any fibration
with connected fibers over a curve of genus $b\geq 2$, then
\begin{equation}\label{BPV} h^{1,1}(S)\geq 2q(S)-1.
\end{equation}
\end{theorem}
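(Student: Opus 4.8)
The plan is to exhibit $2q-1$ linearly independent classes in $H^{1,1}(S)$ built from wedge products of harmonic $1$-forms, using the Castelnuovo--de Franchis theorem to rule out the degenerate cases. Write $q=q(S)$, fix a nonzero $\omega_1\in H^{1,0}(S)=H^0(S,\Omega^1_S)$, and extend it to a basis $\omega_1,\dots,\omega_q$. Since on a compact K\"ahler surface harmonic forms are closed and a wedge of closed forms represents the cup product of their classes, I get two $\C$-linear maps into $H^{1,1}(S)$, namely $f(\omega)=[\omega\wedge\bar\omega_1]$ on $H^{1,0}(S)$ and $g(\bar\eta)=[\omega_1\wedge\bar\eta]$ on $H^{0,1}(S)$, which agree on the single vector $f(\omega_1)=g(\bar\omega_1)=[\omega_1\wedge\bar\omega_1]$. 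The goal becomes to show $\dim\bigl(\operatorname{im}f+\operatorname{im}g\bigr)\ge 2q-1$.

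The computational heart is a sign identity for the $\C$-bilinear cup product form $Q(\alpha,\beta)=\int_S\alpha\wedge\beta$ on $H^2(S,\C)$. Reordering wedge factors yields, for any $\omega\in H^{1,0}(S)$,
\begin{equation*}
\int_S [\omega\wedge\bar\omega_1]\wedge\overline{[\omega\wedge\bar\omega_1]}=-\int_S (\omega\wedge\omega_1)\wedge\overline{(\omega\wedge\omega_1)},
\end{equation*}
and the right-hand integral is the Hodge--Riemann form on $H^{2,0}(S)=H^0(S,\Omega^2_S)$, which is positive definite; hence it vanishes if and only if $\omega\wedge\omega_1=0$ as a holomorphic $2$-form. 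This is exactly where the hypothesis enters: by Castelnuovo--de Franchis, if $\omega$ and $\omega_1$ were linearly independent with $\omega\wedge\omega_1=0$, they would be pulled back from a smooth curve of genus $\ge 2$, contradicting the assumption that $S$ admits no such fibration. Thus $\omega\wedge\omega_1=0$ forces $\omega\in\C\,\omega_1$.

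With this in hand, injectivity of $f$ is immediate: if $f(\omega)=0$ the displayed Hermitian self-pairing vanishes, so $\omega\wedge\omega_1=0$, so $\omega=c\,\omega_1$; but $f(\omega_1)=[\omega_1\wedge\bar\omega_1]\neq 0$ (pair against the K\"ahler class, since $\tfrac{i}{2}\,\omega_1\wedge\bar\omega_1$ is a nonzero semipositive $(1,1)$-form), whence $c=0$ and $\omega=0$. By complex conjugation $g$ is injective as well, so $\dim\operatorname{im}f=\dim\operatorname{im}g=q$. For the intersection, observe that each image is totally isotropic for $Q$, because $Q([\omega\wedge\bar\omega_1],[\omega'\wedge\bar\omega_1])=-\int_S(\omega\wedge\omega')\wedge(\bar\omega_1\wedge\bar\omega_1)=0$ and symmetrically for $g$. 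If $\xi\in\operatorname{im}f\cap\operatorname{im}g$, then $\xi$ is $Q$-orthogonal to all of $\operatorname{im}f+\operatorname{im}g$; since $\overline{\operatorname{im}f}=\operatorname{im}g$ we have $\bar\xi\in\operatorname{im}g$, so $\int_S\xi\wedge\bar\xi=0$, and the sign identity together with Castelnuovo--de Franchis forces $\xi\in\C\,[\omega_1\wedge\bar\omega_1]$. Therefore $\dim(\operatorname{im}f\cap\operatorname{im}g)=1$, and
\begin{equation*}
h^{1,1}(S)\ \ge\ \dim\bigl(\operatorname{im}f+\operatorname{im}g\bigr)\ =\ q+q-1\ =\ 2q-1.
\end{equation*}

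The step I expect to be the main obstacle is the passage ``vanishing of the Hermitian self-pairing $\Rightarrow$ a decomposable isotropic $2$-vector $\Rightarrow$ a fibration,'' which is precisely the content that forces me to invoke Castelnuovo--de Franchis; the supporting technical care is in getting the Hodge--Riemann signs right, so that isotropy genuinely translates into the algebraic condition $\omega\wedge\omega_1=0$ rather than a weaker inequality.
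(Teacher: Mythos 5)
Your argument is correct. Note, however, that the paper does not prove this statement at all --- it is quoted from [BPV04, Corollary~5.4] as a known result --- so there is no in-paper proof to compare against. What you wrote is essentially the standard Castelnuovo--de Franchis argument, and it is also exactly the circle of ideas the authors deploy in Section~2 for their Theorem~1.3: there they wedge the pulled-back forms $V_0=f^*H^0(\Omega_C)$ against a complement $V_1$, prove injectivity of the resulting map into $H^{1,1}(S)$ by the same Stokes/Hodge--Riemann positivity computation you use (the vanishing of $\int_S(\alpha_1\wedge\omega_k)\wedge\overline{(\alpha_1\wedge\omega_k)}$ forces $\alpha_1\wedge\omega_k=0$), and use the fibration structure where you invoke Castelnuovo--de Franchis to kill the degenerate case. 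Your sign bookkeeping is right, the identification of $\operatorname{im}f\cap\operatorname{im}g$ with the line $\mathbb{C}\,[\omega_1\wedge\bar\omega_1]$ is correct, and the only cosmetic omission is the trivial case $q(S)=0$, where no $\omega_1$ exists but the inequality $h^{1,1}(S)\geq -1$ is vacuous.
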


 In what follows, we consider the case when $S$ admits a fibration $f:S\to C$ over a smooth curve $C$ of genus
 $b$. It is obvious
that $q(S)\geq b$. We assume that the fibers are connected and the generic fiber
 is a smooth curve of genus $g$.
Let $F_1,\cdots,F_s$ be all singular fibers of $f$ and $\ell(F_i)$
be the number of irreducible components of $F_i$.
\begin{theorem}[\cite{Bea81}, Lemma~2]
\begin{align}\label{BEA}
h^{1,1}(S)\geq \rho(S)\geq  2+\sum\limits_{i=1}^s(\ell(F_i)-1).
\end{align}
\end{theorem}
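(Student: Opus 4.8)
The first inequality $h^{1,1}(S)\ge\rho(S)$ is exactly the Lefschetz $(1,1)$-statement already recalled above, since $\mathrm{NS}(S)=H^{1,1}(S)\cap H^2(S,\Z)$ sits inside $H^{1,1}(S)$; so the whole content lies in the second inequality, and the plan is to exhibit $2+\sum_{i=1}^s(\ell(F_i)-1)$ divisor classes that are linearly independent in $\mathrm{NS}(S)\otimes\Q$.

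To set up, I would fix an ample divisor $H$ on $S$ and let $F$ denote the class of a general (smooth) fibre; these account for the leading ``$2$''. For each singular fibre write $F_i=\sum_{j=1}^{\ell(F_i)}a_{ij}\Gamma_{ij}$ with $\Gamma_{ij}$ its irreducible components and $a_{ij}>0$ the multiplicities. As the candidate independent set I would take
$$\{\,H,\ F\,\}\ \cup\ \{\,\Gamma_{ij}\ :\ 1\le i\le s,\ 2\le j\le\ell(F_i)\,\},$$
that is, all components of every singular fibre \emph{except one per fibre}, say $\Gamma_{i1}$. This set has exactly $2+\sum_i(\ell(F_i)-1)$ elements, so it suffices to prove their independence.

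The independence I would establish directly from the intersection pairing. Suppose a numerical relation $\alpha H+\beta F+\sum_{i,\,j\ge2}c_{ij}\Gamma_{ij}\equiv 0$ holds. Intersecting with $F$ and using $F\cdot F=0$, $F\cdot\Gamma_{ij}=0$ and $F\cdot H>0$ forces $\alpha=0$. The remaining relation is purely vertical; intersecting it with each component $\Gamma_{ik}$ and using that components of distinct fibres are disjoint (so $\Gamma_{i'j}\cdot\Gamma_{ik}=0$ for $i'\ne i$) together with $F\cdot\Gamma_{ik}=0$, one finds for each fixed $i$ that the vector $(0,c_{i2},\dots,c_{i\ell(F_i)})$ lies in the kernel of the intersection matrix $M_i=(\Gamma_{ij}\cdot\Gamma_{ik})_{j,k}$.

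Here the key input enters. By Zariski's Lemma the matrix $M_i$ is negative semi-definite and its kernel is the single line spanned by the multiplicity vector $(a_{i1},\dots,a_{i\ell(F_i)})$, whose entries are all strictly positive. A kernel vector whose first coordinate is $0$ must then be a $0$-multiple of this all-positive vector, hence vanishes; so all $c_{ij}=0$. Finally $\beta F\equiv 0$ with $F\cdot H>0$ gives $\beta=0$, completing the independence and hence the bound $\rho(S)\ge 2+\sum_i(\ell(F_i)-1)$. The main obstacle is precisely the correct use of Zariski's Lemma: one needs both negative semi-definiteness and, crucially, the exact identification of the radical with the all-positive multiplicity line, which is what makes ``dropping one component per fibre'' yield genuinely independent classes; this identification in turn rests on the connectedness of the fibres assumed in the hypotheses.
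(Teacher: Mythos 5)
Your argument is correct and is essentially the standard proof of Beauville's inequality, which is also the mechanism the paper relies on in Section 2 (where the semi-negativity of the intersection matrix of a connected fibre gives $\dim\,{\rm Im}\,c_1=1+\sum_i(\ell(F_i)-1)$, and an ample class is shown to lie outside the span of the vertical classes). Your use of Zariski's Lemma — negative semi-definiteness plus the identification of the radical with the positive multiplicity vector — is exactly the right key input, so the proposal matches the paper's approach.
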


In this paper, we will generalize Beauville's inequality.

   \begin{theorem}\label{theoremh11} Let $f:S\to C$ be a genus $g$ fiberation as above, $b=g(C)$. Then
\begin{align}\label{eqh11}  h^{1,1}(S)\geq
2(q(S)-b)b+2+\sum\limits_{i=1}^s(\ell(F_i)-1).
\end{align}
\end{theorem}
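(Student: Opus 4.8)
The plan is to filter $H^2(S,\C)$ by the Leray filtration $L^\bullet$ attached to $f$ and to bound the $(1,1)$-part of each graded piece separately. Since $f$ is projective, the Leray spectral sequence degenerates at $E_2$ and $L^\bullet$ is a filtration of $H^2(S,\C)$ by sub-Hodge structures; hence $h^{1,1}(S)=\sum_{p=0}^{2}\dim\bigl(\mathrm{Gr}^p_L H^2(S)\bigr)^{1,1}$, with graded pieces $\mathrm{Gr}^2_L=H^2(C,\C)$, $\mathrm{Gr}^1_L=H^1(C,R^1f_*\C)$ and $\mathrm{Gr}^0_L=H^0(C,R^2f_*\C)$. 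The outer two pieces are of pure type $(1,1)$: the class $H^2(C,\C)=\C$ is the fiber class (dimension $1$), while $H^0(C,R^2f_*\C)$ is spanned by the fundamental classes of the irreducible components of the fibers and has dimension $1+\sum_i(\ell(F_i)-1)$, which is exactly the input behind Beauville's inequality \eqref{BEA}. Together these account for the summand $2+\sum_i(\ell(F_i)-1)$, and since they lie in distinct graded pieces no separate independence argument against the middle term is required.

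The entire content is thus to show $\dim(\mathrm{Gr}^1_L H^2(S))^{1,1}\ge 2(q(S)-b)b$. First I would analyze $H^1(S)$: Leray degeneration in degree $1$ together with $f_*\C=\C$ gives a splitting of Hodge structures $H^1(S,\C)=f^*H^1(C,\C)\oplus H^{\mathrm{fix}}$, where $H^{\mathrm{fix}}=H^0(C,R^1f_*\C)$ is the fixed part of the weight-one variation of Hodge structure $\mathbb{V}=R^1f_*\C$ on the smooth locus. Comparing $(1,0)$-parts yields $q(S)=b+h^{1,0}(H^{\mathrm{fix}})$, so $h^{1,0}(H^{\mathrm{fix}})=h^{0,1}(H^{\mathrm{fix}})=q(S)-b=:r$ and $\dim H^{\mathrm{fix}}=2r$. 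The tensor Hodge structure $H^1(C,\C)\otimes H^{\mathrm{fix}}$ then has weight $2$ with $(1,1)$-part of dimension $h^{1,0}(C)h^{0,1}(H^{\mathrm{fix}})+h^{0,1}(C)h^{1,0}(H^{\mathrm{fix}})=2rb$.

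Next I would realize these classes inside $\mathrm{Gr}^1_L H^2(S)$ by cup product. The map $\phi\colon H^1(C,\C)\otimes H^{\mathrm{fix}}\to H^2(S,\C)$, $\phi(a\otimes s)=f^*a\cup s$, is a morphism of Hodge structures landing in $L^1$, and on the associated graded it coincides with the $E_2$-product $H^1(C,\C)\otimes H^0(C,R^1f_*\C)\to H^1(C,R^1f_*\C)$. To see that the induced map into $\mathrm{Gr}^1_L$ is injective I would pair: for $a,a'\in H^1(C,\C)$ and $s,s'\in H^{\mathrm{fix}}$,
\[
\int_S (f^*a\cup s)\cup(f^*a'\cup s')=\Bigl(\int_C a\cup a'\Bigr)\,\langle s,s'\rangle ,
\]
since $f^*(a\cup a')$ is a multiple of the fiber class $[F]$ and $\int_S[F]\cup s\cup s'$ is the intersection pairing of the restrictions of $s,s'$ to a general fiber. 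This trilinear expression is precisely the cup product $\mathrm{Gr}^1_L\otimes\mathrm{Gr}^1_L\to\mathrm{Gr}^4_L H^4(S)=\C$, so it depends only on the $\mathrm{Gr}^1_L$-components of $\phi(a\otimes s)$. As $\int_C$ is nondegenerate by Poincar\'e duality and $\langle\,,\,\rangle$ is nondegenerate on $H^{\mathrm{fix}}$ (a polarization restricts to a polarization on any sub-Hodge structure), the resulting form on $H^1(C,\C)\otimes H^{\mathrm{fix}}$ is nondegenerate, whence $\phi$ is injective on $\mathrm{Gr}^1_L$. Being a morphism of Hodge structures it carries the $2rb$-dimensional $(1,1)$-part injectively, giving $\dim(\mathrm{Gr}^1_L H^2)^{1,1}\ge 2rb$; summing the three graded contributions yields \eqref{eqh11}.

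The main obstacle I expect is not the injectivity — which the pairing above settles cleanly — but the Hodge-theoretic bookkeeping for the middle piece when $f$ has singular fibers: one must know that the Leray filtration is by sub-Hodge structures and that $H^0(C,R^1f_*\C)$ carries a polarized Hodge structure of the expected type, which rests on Deligne's degeneration theorem, the theorem of the fixed part, and the correct interpretation of $H^1(C,R^1f_*\C)$ (equivalently the $E_2^{1,1}$ term) as the intersection cohomology of the underlying variation. I would isolate these as preliminary lemmas, so that the surface-geometric identity $\dim H^0(C,R^2f_*\C)=1+\sum_i(\ell(F_i)-1)$ and the cup-product computation then proceed formally.
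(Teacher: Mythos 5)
Your argument is correct in outline and arrives at the same bound, but by a genuinely different route from the paper's. The paper works entirely with holomorphic $1$-forms: it splits $H^0(\Omega_S)=V_0\oplus V_1$ with $V_0=f^*H^0(\Omega_C)$, considers the wedge map $h:V_0\otimes\overline V_1\oplus\overline V_0\otimes V_1\to H^{1,1}(S)$ together with the Chern classes of fiber components, and proves three independence statements by hand — injectivity of $h$ via a Stokes-theorem computation, ${\rm Im}\,h\cap({\rm Im}\,c_1)\otimes\C=0$ via restriction to fiber components and Zariski's lemma, and the independence of an ample class. You replace all three independence statements by the single observation that the three contributions live in distinct graded pieces of the Leray filtration, and you replace the Stokes computation by the nondegeneracy of the cup-product pairing on $H^1(C,\C)\otimes H^{\mathrm{fix}}$ — which is the paper's integral $\int_S(\alpha_1\wedge\omega_k)\wedge\overline{(\alpha_1\wedge\omega_k)}$ in disguise. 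What your route buys is conceptual transparency; what it costs is exactly the machinery you flag: degeneration of Leray at $E_2$ (Deligne's theorem as usually stated requires $f$ smooth, so here you need either the decomposition theorem or the elementary check that $E_2^{2,0}=H^2(C,\C)$ injects into $H^2(S)$ and that $E_2^{0,2}=H^0(C,R^2f_*\C)$ is hit by global algebraic classes), the fact that the Leray filtration is by sub-Hodge structures, and the local and global invariant cycle theorems to identify $H^0(C,R^1f_*\C)$ with the monodromy invariants and to see that the intersection form is nondegenerate there. Two small inaccuracies, neither affecting the count: the classes $[\Gamma_{ij}]$ span only the codimension-one subspace $\ker\bigl(H^0(C,R^2f_*\C)\to\C\bigr)$, since each $\Gamma_{ij}$ meets a general fiber trivially; the remaining dimension is supplied by an ample class, which is precisely where the paper's Lemma \ref{Claim 1.} reappears in your bookkeeping. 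Also the target of your pairing is $\mathrm{Gr}^2_LH^4(S)=H^2(C,R^2f_*\C)$, not $\mathrm{Gr}^4_L$; the point you actually need is $L^1\cup L^2\subseteq L^3H^4(S)=0$, so that the form descends to $\mathrm{Gr}^1_L\otimes\mathrm{Gr}^1_L$, and that is fine.
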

As a consequence,
$$h^{1,1}(S)\geq 2bq(S)+2-2b,$$
it is an analogue of
\eqref{BPV} for fibered surfaces.

Let $\Gamma_1,\cdots, \Gamma_k$ be all irreducible components of a
given fiber $F$, and $\widetilde{\Gamma}_i\to \Gamma_i$ be the
normalization of $\Gamma_i$.
   $g(F):=\sum\limits_{i=1}^k g(\widetilde{\Gamma}_i)$ is called the geometric genus of $F$.  One can see that  $g(F)\leq
   p_a(F)=g$.

   Denote by $q_f:=q(S)-b$ the relative irregularity of $f$.
Beauville \cite{Bea81} proves  that for any fiber $F$,
\begin{equation}\label{BEA2}
g(F)\geq q_f\geq 0.\end{equation}
%   (Corollary \ref{cortopology}).

   Let $s_1$ be the number of singular fibers satisfying
 $g(F)< g$, without loss of generality, we assume that $F_1,\cdots,F_{s_1}$ satisfy $g(F)<g$. In fact, $s_1$ is exactly
 the number of singular fibers of the associated family of Jacobians of the fibers.
The new inequality on $h^{1,1}$ is related to the Arakelov
inequality due to Viehweg and the last author. %%%duo->due
 \begin{theorem}\label{theoremequality}
With the notations as above,  we have
\begin{align*} 2\deg
f_*\omega_{S/C}  = \, &
(g-q_f)(2b-2+s_1)-\sum\limits_{i=1}^{s_1}(g(F_i)-q_f)
\\
&-\left(h^{1,1}(S)
-2q_fb-2-\sum\limits_{i=1}^s(\ell(F_i)-1)\right)+\sum\limits_{i=1}^{s_1}N_{\overline{F}_i}
.
\end{align*}
where $g\geq N_{\overline{F}_i}:=g-p_a(\overline F_{i,\,\text{\rm red}})\geq 0$,
and $\overline F_i=\sigma^*F_i$ is the normal crossing model of
$F_i$ obtained by a sequence of blowing-ups, see Sect.
\ref{Secaraeqn}. In particular, if $F_i$ is semistable, then
$N_{\overline{F}_i}=0$.
  \end{theorem}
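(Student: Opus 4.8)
The plan is to reduce the asserted equality to three independent computations and combine them by linear algebra. Write $M:=H^{1}(C,R^{1}f_{*}\Q)$ for the middle graded piece of the Leray filtration on $H^{2}(S,\Q)$, and let $m^{2,0},m^{1,1},m^{0,2}$ be its Hodge numbers. The three targets are a Hodge-theoretic splitting of $h^{1,1}$, the evaluation of $\deg f_{*}\omega_{S/C}$, and the evaluation of $\dim M$ via the topological Euler number; explicitly I aim for
\begin{align*}
h^{1,1}(S)&=2+\sum_{i=1}^{s}(\ell(F_i)-1)+m^{1,1},\qquad \dim M=2p_g+m^{1,1},\\
\deg f_{*}\omega_{S/C}&=p_g-q_f-g(b-1),
\end{align*}
after which the theorem becomes the assertion that $\dim M$ equals the value forced by the Euler-number formula below.

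For the first pair of identities I would invoke the decomposition theorem (so the Leray spectral sequence degenerates at $E_2$, compatibly with the Hodge structure, since $C$ is a curve): $H^{2}(S,\Q)$ then carries a filtration by pure sub-Hodge-structures with graded pieces $H^{2}(C,\Q)$, $M$ and $H^{0}(C,R^{2}f_{*}\Q)$. The two outer pieces are spanned by the fibre class and by the classes of the components of the singular fibres, hence are of Tate type $(1,1)$ and, by Beauville's count in \eqref{BEA}, have total dimension $2+\sum_i(\ell(F_i)-1)$. Since $M$ is pure of weight $2$ with $m^{2,0}=m^{0,2}$, strictness of the Hodge filtration puts all of $H^{2,0}(S)$ inside $M$, giving $m^{2,0}=p_g$; comparing with $b_2(S)=2p_g+h^{1,1}(S)$ yields both displayed formulas.

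The degree formula is Riemann--Roch on $C$ for $\omega_C\otimes f_{*}\omega_{S/C}$ together with Fujita's decomposition $f_{*}\omega_{S/C}\cong\mathcal O_C^{\oplus q_f}\oplus\mathcal A$ with $h^{0}(\mathcal A^{\vee})=0$: using $p_g=h^{0}(C,\omega_C\otimes f_{*}\omega_{S/C})$ and $h^{1}(C,\omega_C\otimes f_{*}\omega_{S/C})=h^{0}((f_{*}\omega_{S/C})^{\vee})=q_f$ gives $\deg f_{*}\omega_{S/C}=p_g-q_f-g(b-1)$. For $\dim M$ I would combine $\dim M=b_2(S)-2-\sum_i(\ell(F_i)-1)$ with $b_2(S)=e(S)-2+4q(S)$ and the fibration formula $e(S)=(2-2b)(2-2g)+\sum_{i=1}^{s}\bigl(e(F_i)-(2-2g)\bigr)$. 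After substitution the global terms cancel and only a per-fibre identity remains, namely $e(F_{i,\mathrm{red}})=\ell(F_i)-g(F_i)+1-p_a(\overline F_{i,\mathrm{red}})$. Because every term of the theorem is unchanged under blowing up $S$ (the increments of $h^{1,1}$ and of $\sum_i(\ell(F_i)-1)$ cancel, while $N_{\overline F_i}$ is read off the fixed normal crossing model), I may assume the fibres already normal crossing, where this is the elementary relation $e=2\ell-2g(F_i)-\delta$ and $p_a=g(F_i)+\delta-\ell+1$ for a reduced nodal curve. Since $g(F_i)\le p_a(\overline F_{i,\mathrm{red}})\le g$, a fibre with $g(F_i)=g$ has $N_{\overline F_i}=0$ and contributes nothing, so only the $s_1$ Jacobian-degenerate fibres survive; this produces the factor $2b-2+s_1$ and the sums running to $s_1$.

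The main obstacle is the first step: justifying that the outer Leray pieces are purely of type $(1,1)$ with Beauville's dimension is exactly what isolates $m^{2,0}=p_g$ and identifies the Beauville ``excess'' $m^{1,1}=h^{1,1}(S)-2-\sum_i(\ell(F_i)-1)$ with the variable part of the Hodge structure. The secondary subtlety is the local analysis at non-reduced or non-nodal fibres, which the normal crossing model handles. Once these are in place, assembling the three computations into the stated equality is a routine linear calculation, and the weight-one variation of Hodge structure of the associated family of Jacobians is the conceptual reason the answer is organized around the Arakelov quantity $\deg\Omega^{1}_{C}(\log)=2b-2+s_1$.
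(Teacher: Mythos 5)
Your proposal is correct, and at its core it is the same computation as the paper's: both proofs reduce the identity to (i) the Hodge-theoretic relation between $h^{1,1}(S)$, $p_g$, $q$ and $e(S)$, (ii) the formula $\deg f_*\omega_{S/C}=\chi(\sO_S)-(g-1)(b-1)$, and (iii) a fibre-by-fibre evaluation of the local Euler contributions $e_{F_i}=\chi_{\rm top}(F_{i,\rm red})-(2-2g)$. Two remarks on where you diverge. First, the Leray/decomposition-theorem apparatus is logically superfluous: your three claims $h^{1,1}=2+\sum_i(\ell(F_i)-1)+m^{1,1}$, $\dim M=2p_g+m^{1,1}$ and $\dim M=b_2(S)-2-\sum_i(\ell(F_i)-1)$ combine to exactly $b_2(S)=h^{1,1}(S)+2p_g(S)$, which is all the paper uses (its equation \eqref{eq2chifef} comes from $e=2-4q+2p_g+h^{1,1}$ and $\chi(\sO_S)=1-q+p_g$); so the heavy lifting you flag as ``the main obstacle'' (purity and dimension of the outer Leray pieces) can simply be bypassed. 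Second, the genuine difference is in step (iii): the paper establishes the per-fibre identity $e_{F_i}=(g-g(F_i))+N_{\overline F_i}+(\ell(F_i)-1)$ via the Milnor-number recursion under blow-ups (Lemmas \ref{lemmilnor} and \ref{lemNPamu}, Corollary \ref{cortopology}), whereas you reduce to normal crossing fibres by observing that every term of the theorem is invariant under the partial resolution and then use the elementary formulas for a connected nodal curve. Your reduction is valid --- one checks that a single blow-up raises both $h^{1,1}$ and $\sum_i(\ell(F_i)-1)$ by one, raises $\chi_{\rm top}(F_{\rm red})$ and $\ell(F)$ by one, and leaves $\chi_f$, $g(F_i)$, $s_1$ and $N_{\overline F_i}$ unchanged --- and is arguably cleaner, though the verification of this invariance is essentially the same bookkeeping as the paper's Lemma \ref{lemNPamu}. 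You do implicitly use $0\le N_{\overline F_i}$ and $g(F_i)\le p_a(\overline F_{i,\rm red})$ to discard the fibres with $g(F_i)=g$; these need the small argument given in Corollary \ref{cortopology}(2)--(3), but nothing is missing in substance.
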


 As an application of the inequalities \eqref{eqh11}, \eqref{BEA2} and Theorem \ref{theoremequality},
 we have
%inequality by combining
%Theorem\ref{theoremh11} and \ref{theoremequality}.
\begin{corollary}\label{theoremAras1} {\rm (Viehweg-Zuo
\cite{VZ06})} Let $f:S\to C$ be a non-trivial semistable fibration.
Let $s_1$ be the number of fibers with non-compact Jacobians. Then we
have
\begin{align}\label{eqarasemi}
 \deg f_*\omega_{S/C}\leq \frac{g-q_f}{2}(2b-2+s_1).
\end{align}
\end{corollary}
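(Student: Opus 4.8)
The plan is to read the corollary off directly from the master equality of Theorem~\ref{theoremequality} by bounding each of the three error terms on its right-hand side. Writing that equality schematically as
\[
2\deg f_*\omega_{S/C} = (g-q_f)(2b-2+s_1) + B + C + D,
\]
with $B=-\sum_{i=1}^{s_1}(g(F_i)-q_f)$, $C=-\bigl(h^{1,1}(S)-2q_fb-2-\sum_{i=1}^s(\ell(F_i)-1)\bigr)$, and $D=\sum_{i=1}^{s_1}N_{\overline F_i}$, the whole content of the corollary reduces to the single inequality $B+C+D\le 0$. I would therefore handle the three pieces one at a time.

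First, for $B$ I would invoke Beauville's inequality \eqref{BEA2}, which asserts $g(F)\ge q_f$ for \emph{every} fiber $F$. Applied to each of the $s_1$ fibers in the sum this gives $g(F_i)-q_f\ge 0$, hence $B\le 0$. Second, for $C$ I would apply the main bound \eqref{eqh11} of Theorem~\ref{theoremh11}, which says precisely that $h^{1,1}(S)-2q_fb-2-\sum_{i=1}^s(\ell(F_i)-1)\ge 0$; the quantity inside the parentheses is thus nonnegative, so its negative $C$ satisfies $C\le 0$. Third, for $D$ I would use the hypothesis that $f$ is semistable: by the final sentence of Theorem~\ref{theoremequality}, semistability of each $F_i$ forces $N_{\overline F_i}=0$, whence $D=0$.

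Combining the three bounds gives $B+C+D\le 0$, so $2\deg f_*\omega_{S/C}\le (g-q_f)(2b-2+s_1)$, and dividing by $2$ yields \eqref{eqarasemi}. Before running this, the one bookkeeping point I would verify is that the index $s_1$ is used consistently: in Theorem~\ref{theoremequality} it counts the singular fibers with $g(F)<g$, which, as noted in the text, is exactly the number of fibers whose associated Jacobian degenerates, i.e.\ the fibers with non-compact Jacobian in the corollary's statement; the non-triviality hypothesis plays no role in the direction of the inequality and only guarantees the statement is not vacuous. With that identification in hand, the main obstacle is genuinely \emph{not} in this deduction, which is purely formal — it lies upstream, in establishing the exact identity of Theorem~\ref{theoremequality} and the Hodge-number estimate \eqref{eqh11}, which the corollary merely packages together under semistability.
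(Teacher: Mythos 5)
Your proposal is correct and is exactly the paper's argument: the paper derives the corollary by combining the identity of Theorem \ref{theoremequality} with the bounds \eqref{eqh11} and \eqref{BEA2} and the vanishing $N_{\overline F_i}=0$ for semistable fibers, which is precisely your term-by-term estimate $B\le 0$, $C\le 0$, $D=0$. Your side remark identifying the two uses of $s_1$ (fibers with $g(F)<g$ versus fibers with non-compact Jacobian) matches the identification already made in the introduction, so nothing further is needed.
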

Furthermore, Viehweg and Zuo prove that the above inequality becomes
equality if and only if the curve $C$ is a Shimura curve in the
moduli space of curves.

Since $s_1\leq s$, the above result implies the classical Arakelov inequality as follows.
\begin{corollary}[Arakelov \cite{Ara71}, Faltings \cite{Fal83}]
$$\deg f_*\omega_{S/C}\leq \frac{g-q_f}{2}(2b-2+s).$$
In particular,   we have a  weaker inequality
\begin{align}\label{eqweakineq}
\deg f_*\omega_{S/C}\leq \frac{g}{2}(2b-2+s).
\end{align}
\end{corollary}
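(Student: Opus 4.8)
The plan is to obtain this statement as a direct consequence of the Viehweg--Zuo inequality \eqref{eqarasemi} in Corollary \ref{theoremAras1}, using only the two elementary facts $s_1\le s$ and $g-q_f\ge 0$. First I would note that, by definition, $s_1$ counts just those singular fibers $F_i$ with $g(F_i)<g$, which form a subset of the complete list $F_1,\dots,F_s$ of singular fibers; hence $s_1\le s$. Second, applying Beauville's bound \eqref{BEA2} to a generic fiber, which is smooth of geometric genus $g$, gives $q_f\le g$, so that the factor $\frac{g-q_f}{2}$ is nonnegative.

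With these two observations the first inequality follows at once. Since $\frac{g-q_f}{2}\ge 0$ and $2b-2+s_1\le 2b-2+s$, the product respects the order, and combining with \eqref{eqarasemi} yields
\[
\deg f_*\omega_{S/C}\ \le\ \frac{g-q_f}{2}(2b-2+s_1)\ \le\ \frac{g-q_f}{2}(2b-2+s).
\]
For the weaker bound \eqref{eqweakineq} I would invoke $q_f\ge 0$, again from \eqref{BEA2}, to get $g-q_f\le g$; as long as the multiplier $2b-2+s$ is nonnegative this gives $\frac{g-q_f}{2}(2b-2+s)\le\frac{g}{2}(2b-2+s)$, completing the deduction. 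The sign of $2b-2+s$ is automatic for $b\ge 1$; for $b=0$ one uses that a non-trivial fibration cannot have too few singular fibers, so that $s\ge 2$ and $2b-2+s\ge 0$.

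The main point requiring care is that \eqref{eqarasemi} is established only for semistable fibrations, whereas the Arakelov--Faltings bound is stated without that restriction. The hard part will therefore be to pass from the semistable case to the general one. I would handle this by the standard device of semistable reduction: after a suitable finite base change $C'\to C$ the pulled-back family becomes semistable, and one tracks how $\deg f_*\omega_{S/C}$, the base genus, and the number of bad fibers transform under the base change, so that the semistable estimate descends to the original inequality. Alternatively one may read the statement within the same semistable context as Corollary \ref{theoremAras1}, in which case the two displayed chains above require no further computation and finish the proof immediately.
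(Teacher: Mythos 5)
Your proposal is correct and follows the paper's own (one-line) derivation exactly: the corollary is deduced from \eqref{eqarasemi} via $s_1\le s$ together with $g\ge q_f\ge 0$ from \eqref{BEA2}. The paper states this corollary within the same semistable setting as Corollary \ref{theoremAras1}, so your closing remark about reading it that way is the intended one and the semistable-reduction digression is not needed.
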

\begin{remark}
In fact the inequality \eqref{eqweakineq} is strict when $g\geq 2$.  One can also get it by  combining
 Cornalba-Harris-Xiao's inequality \cite{CH88, Xia87}
$$
 \dfrac{4g-4}{g}\deg f_*\omega_{S/C}\leq \omega_{S/C}^2
$$
with Vojta's canonical class inequality \cite{Voj88}
\begin{align}
\omega_{S/C}^2 <  (2g-2) \cdot (2b-2+s),
\end{align}
which is indeed strict (cf. \cite{Tan95, Liu96}).

%In higher dimension cases, K. Zuo and E. Viehweg generalized Arakelov type inequalities (cf. \cite{VZ01, VZ06}).
\end{remark}

When $f:S\to\mathbb P^1$ is a semistable family  over $\mathbb P^1$,
the equality in Theorem \ref{theoremequality} can be rewritten as
\begin{align}\label{pg=} \frac12(g-q(S))(s_1-4)= \ & p_g(S)+\dfrac12\sum\limits_{i=1}^{s_1}\left(g(F_i)-q(S)\right)
\\ \notag %%%notag
&+\dfrac12\left(h^{1,1}(S)-2-\sum\limits_{i=1}^s(\ell(F_i)-1)\right),
\end{align}
where the formula $\deg f_*\omega_{S/C}=\chi(\mathcal
O_S)-(g-1)(b-1)$ is used.
\begin{corollary}\label{corPgq1s14}
Assume that  $f:S\to \BP^1$ is semistable and non-trivial. Then we
have $s_1\geq 4$. $s_1=4$  if and only if the following conditions
are satisfied.
\begin{enumerate}
\item[$(1)$] \ $p_g(S)=0$;   $g(F_i)=q(S)$ for $i=1,\cdots,4$;

\item[$(2)$] \ $h^{1,1}(S)=2+\sum\limits_{i=1}^s (\ell(F_i)-1)$.
\item[$(3)$] \ $q(S)\leq 1$;
\end{enumerate}
\end{corollary}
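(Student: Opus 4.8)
The plan is to extract the entire statement from the identity \eqref{pg=}, whose right-hand side is a sum of three manifestly non-negative quantities. First I would record these non-negativities: $p_g(S)\ge 0$ trivially; by \eqref{BEA2}, applied to each singular fiber $F_i$ with $i\le s_1$, one has $g(F_i)-q(S)=g(F_i)-q_f\ge 0$ (recall $q(S)=q_f$ since $b=g(\mathbb P^1)=0$); and by Beauville's inequality \eqref{BEA}, $h^{1,1}(S)-2-\sum_{i=1}^s(\ell(F_i)-1)\ge 0$. Hence the right-hand side of \eqref{pg=} is $\ge 0$.

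Next I would pin down the sign of the coefficient $\tfrac12(g-q(S))$ on the left. Applying \eqref{BEA2} to a smooth fiber gives $g\ge q_f=q(S)$, so the coefficient is $\ge 0$. To rule out $g=q(S)$, suppose it holds; then the left-hand side of \eqref{pg=} vanishes, forcing the right-hand side to vanish, and in particular $g(F_i)=q(S)=g$ for all $i\le s_1$. If $s_1\ge 1$ this contradicts the defining property $g(F_i)<g$ of the $s_1$ fibers; and $s_1=0$ would make the associated family of Jacobians an abelian scheme over $\mathbb P^1$, hence isotrivial, so by rigidity over the simply connected base the semistable family would be trivial, contradicting non-triviality. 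Therefore $g>q(S)$, and \eqref{pg=} together with the non-negativity above yields $s_1-4\ge 0$, i.e.\ $s_1\ge 4$.

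For the equality case I would argue that, since $g>q(S)$, we have $s_1=4$ if and only if the left-hand side of \eqref{pg=} vanishes, if and only if all three non-negative terms on the right vanish simultaneously. Their vanishing gives exactly conditions $(1)$ and $(2)$: $p_g(S)=0$ and $g(F_i)=q(S)$ for $i=1,\dots,4$, together with $h^{1,1}(S)=2+\sum_{i=1}^s(\ell(F_i)-1)$. To obtain $(3)$, I would use $p_g(S)=0$ in $\chi(\mathcal O_S)=1-q(S)+p_g(S)$ to get $\chi(\mathcal O_S)=1-q(S)$; then, observing that a non-trivial fibration with fibers of genus $g\ge 1$ (here $g>q(S)\ge 0$) cannot sit on an irrational ruled surface, so $\kappa(S)\ge 0$ and hence $\chi(\mathcal O_S)\ge 0$, I conclude $q(S)\le 1$. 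The converse is immediate: substituting $(1)$, $(2)$, $(3)$ into \eqref{pg=} makes the right-hand side zero, so the left-hand side is zero, and $g>q(S)$ forces $s_1=4$.

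The main obstacle is the derivation of $(3)$, namely the bound $\chi(\mathcal O_S)\ge 0$, which amounts to excluding surfaces of Kodaira dimension $-\infty$ (irrational ruled surfaces) among those carrying the given fibration. I would handle this by noting that a non-isotrivial fibration of genus $\ge 2$, or a non-trivial elliptic fibration, forces $\kappa(S)\ge 0$, since a surface with $\kappa(S)=-\infty$ admits over $\mathbb P^1$ only genus-$0$ pencils or isotrivial genus-$1$ pencils; once $\kappa(S)\ge 0$ is known, $\chi(\mathcal O_S)\ge 0$ follows from the Enriques--Kodaira classification. The other delicate point, the strict positivity $g>q(S)$ underlying the sign of the coefficient, is less serious and is settled as above by combining the definition of the $s_1$ fibers with the rigidity of abelian schemes over $\mathbb P^1$.
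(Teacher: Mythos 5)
Your derivation of $s_1\ge 4$ and of the equivalence of $s_1=4$ with conditions $(1)$ and $(2)$ is essentially the paper's argument (non-negativity of the three terms on the right of \eqref{pg=}, via \eqref{BEA} and \eqref{BEA2}), and your extra care about the sign of $g-q_f$ is reasonable. The problem is your proof of $(3)$. The claim you rest it on --- that a surface carrying a non-trivial fibration of genus $\ge 1$ over $\mathbb P^1$ cannot be birationally ruled over an irrational curve, so that $\kappa(S)\ge 0$ and hence $\chi(\mathcal O_S)\ge 0$ --- is false. Non-isotrivial semistable fibrations of genus $\ge 2$ over $\mathbb P^1$ do live on birationally ruled surfaces: a rational elliptic surface already refutes the elliptic case of your principle, and the paper's own Example~5.1 (a non-trivial semistable genus-$2$ Lefschetz pencil with $p_g=0$, $q=1$, $K_S^2=-3$, hence $\chi(\mathcal O_S)=0$ and a surface ruled over an elliptic curve) refutes the genus $\ge 2$ case. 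More generally, pencils of double covers of a curve $B$ of genus $q\ge 2$ give non-isotrivial fibrations over $\mathbb P^1$ of genus $\ge 2q-1$ on surfaces ruled over $B$, so $\kappa(S)=-\infty$ with $q\ge 2$ cannot be excluded by the mere existence of a non-trivial fibration. Your argument for $(3)$ therefore does not close.

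What actually rules out $q\ge 2$ is condition $(1)$ itself, which your proof of $(3)$ never uses. The paper argues: if $q\ge 2$, then $p_g=0$ forces $\chi(\mathcal O_S)<0$, so $S$ is ruled over the genus-$q$ curve $B={\rm Im}\,\alpha$, where $\alpha$ is the Albanese map. Since $g(F_i)=q$, each $F_i$ has exactly one component $\Gamma_1$ not contracted by $\alpha$ (a rational component cannot dominate $B$, and a second horizontal component would make $g(F_i)>q$); Hurwitz forces $g(\Gamma_1)=q$ and $\Gamma_1\to B$ of degree $1$, so $\Gamma_1$ is a section of $\alpha$ and $F'\cdot F_i=1$ for a general Albanese fiber $F'$. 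Hence $F'\cdot F=1$ for a general fiber $F$ of $f$, so $F\cong B$ and $f$ is isotrivial; a semistable isotrivial fibration over $\mathbb P^1$ is trivial, contradicting the hypothesis. This use of $g(F_i)=q$ to force the degree of $F\to B$ to be $1$ is the missing idea; without it the bound $q\le 1$ is not reachable, since the inequality $\chi(\mathcal O_S)\ge 0$ you aim for simply fails to follow from (and is not needed beyond) the classification.
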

(1) and (2) are direct consequences of \eqref{BEA} and \eqref{BEA2}.
(3) will be proved in \S4. In \S5, we will present two examples of
genus $g=2$ with $s_1=4$ and $q(S)=1$.

 In  the case when $f$ is non-semistable, we have
 $$N_{\overline F_i}=g-p_a(\overline F_{i,\rm red})\leq g-g(\overline F_i)=g-g(F_i)\leq g-q_f.$$
 We get an inequality as follows.
\begin{corollary}\label{theoremAra2}
If $f$ is non-semistable, then
\begin{align*}
\deg f_*\omega_{S/C}\leq (g-q_f)(b-1+s_1).
\end{align*}
\end{corollary}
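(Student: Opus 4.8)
The plan is to read this off directly from the exact formula in Theorem~\ref{theoremequality} by estimating its right-hand side from above. Writing that identity schematically as
\[
2\deg f_*\omega_{S/C} = (g-q_f)(2b-2+s_1) - A - B + \sum_{i=1}^{s_1}N_{\overline F_i},
\]
with $A=\sum_{i=1}^{s_1}(g(F_i)-q_f)$ and $B=h^{1,1}(S)-2q_fb-2-\sum_{i=1}^s(\ell(F_i)-1)$, the first step is to observe that both $A$ and $B$ are nonnegative. For $A$ this is immediate from Beauville's inequality \eqref{BEA2}, which gives $g(F_i)\geq q_f$ for every fiber; for $B$ it is precisely the content of the generalized inequality \eqref{eqh11} established in Theorem~\ref{theoremh11}. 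Consequently the terms $-A$ and $-B$ are $\leq 0$, and dropping them can only enlarge the right-hand side, so that
\[
2\deg f_*\omega_{S/C} \leq (g-q_f)(2b-2+s_1) + \sum_{i=1}^{s_1}N_{\overline F_i}.
\]

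The decisive second step is to control the correction term $\sum N_{\overline F_i}$, and this is where the non-semistability hypothesis enters. As recorded in the displayed chain preceding the statement, in the non-semistable case one has $N_{\overline F_i}=g-p_a(\overline F_{i,\mathrm{red}})\leq g-g(F_i)\leq g-q_f$ for each $i$, the final inequality again invoking \eqref{BEA2}. Summing over the $s_1$ fibers with non-compact Jacobian yields $\sum_{i=1}^{s_1}N_{\overline F_i}\leq s_1(g-q_f)$. Substituting this estimate and collecting terms, I would obtain
\[
2\deg f_*\omega_{S/C} \leq (g-q_f)(2b-2+s_1) + s_1(g-q_f) = (g-q_f)(2b-2+2s_1),
\]
and dividing by $2$ delivers the claimed bound $\deg f_*\omega_{S/C}\leq (g-q_f)(b-1+s_1)$.

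Once Theorem~\ref{theoremequality} is available the argument is essentially bookkeeping, so I do not expect a serious obstacle; the only genuine point requiring care is identifying which terms on the right-hand side carry a definite sign. The conceptual content is that the semistable normalization $N_{\overline F_i}=0$ (which sharpens the estimate into Corollary~\ref{theoremAras1} and its inequality \eqref{eqarasemi}) is here replaced by the uniform bound $N_{\overline F_i}\leq g-q_f$. It is exactly this weaker control of the non-semistable contribution that loosens the coefficient from $2b-2+s_1$ to $2b-2+2s_1$, producing the present inequality.
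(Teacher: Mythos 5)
Your proposal is correct and follows exactly the paper's route: Theorem~\ref{theoremequality} combined with the nonnegativity of the two bracketed terms (via \eqref{eqh11} and \eqref{BEA2}) and the bound $N_{\overline F_i}\leq g-g(F_i)\leq g-q_f$ from Corollary~\ref{cortopology} and Lemma~\ref{lemq>qf}, which is precisely the chain displayed before the corollary's statement. The bookkeeping is right, so there is nothing to add.
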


\qquad

\section{Proof of Theorem \ref{theoremh11}}\label{Secaraeqn}
Let $f:S\to C$ be a fibration of genus $g$ over a smooth curve $C$
of genus $b$, and $F_1,\cdots, F_s$ be all singular fibers. Due to
Beauville's inequality \eqref{BEA}, we can assume that $b>0$ and
$q_f>0$.  Consider $H^0(\Omega_S)=V_1\oplus V_0$, where
$V_0=f^*H^0(\Omega_C)$ and $\dim V_1=q_f$. Let
 $$V_0=\langle\alpha_1\cdots \alpha_b\rangle, \,\,\,\, V_1=\langle \theta_1,\cdots, \theta_{q_f}\rangle,$$
where $\alpha_i$'s (resp. $\theta_j$'s ) are the base of $V_0$
(resp. $V_1$) as a $\mathbb C$-vector space.

We define a homomorphism
$$h: V_0\otimes \overline{V}_1\oplus \overline{V}_0\otimes V_1\longrightarrow H^{1,1}(S),$$
by $h(x\otimes y)=x\wedge y$ for $x\otimes y\in  V_0\otimes
\overline{V}_1\oplus \overline{V}_0\otimes V_1$.

Let $V_2$ be the subgroup of ${\rm Pic(S)}\otimes\mathbb R$ generated by the classes
of the components of all fibers. Chern class induces a homomorphism
 $c_1: V_2\to H^{1,1}(S)\cap H^2(S,\mathbb R)$. By the semi-negativity of the intersection matrix of a fiber, we have
 $${\rm{dim\,Im}} (c_1)=1+\sum\limits_{i=1}^s(\ell(F_i)-1).$$

\begin{lemma}\label{Claim 1.}  For any ample divisor $H$,
$c_1(H)\notin {\rm Im\,} h + ({\rm Im\,} c_1){\otimes_{\mathbb R}\mathbb C}.$
\end{lemma}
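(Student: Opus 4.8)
The plan is to produce a single $\mathbb C$-linear functional on $H^{1,1}(S)$ that kills the entire subspace ${\rm Im}\,h + ({\rm Im}\,c_1)\otimes_{\mathbb R}\mathbb C$ yet is nonzero on $c_1(H)$; the mere existence of such a functional forces $c_1(H)$ out of the subspace. The natural candidate is pairing against the class $[F]$ of a general fiber, that is $\lambda(\xi)=\xi\cup[F]$, obtained by extending $\mathbb C$-bilinearly the cup-product (intersection) pairing on $H^2(S,\mathbb R)$. Since $H$ is ample and $F$ is a nonzero effective curve, one has $\lambda(c_1(H))=H\cdot F>0$, so the whole proof reduces to checking that $\lambda$ annihilates the two generating families.

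For ${\rm Im}\,h$, I would use that every generator has the shape $x\wedge\bar y$ or $\bar x\wedge y$ with $x\in V_0=f^*H^0(\Omega_C)$. Restricting $x$ to a smooth fiber $F$, the form $x|_F$ is the pullback of a holomorphic $1$-form at the point $f(F)$ and hence vanishes; therefore $(x\wedge\bar y)|_F=0$, and computing the pairing as a fiber integral gives $\lambda(x\wedge\bar y)=\int_F (x\wedge\bar y)|_F=0$, and likewise for the $\bar V_0\otimes V_1$ summand. For $({\rm Im}\,c_1)\otimes_{\mathbb R}\mathbb C$, the generators are the classes $c_1(\Gamma)$ of components $\Gamma$ of the various fibers. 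Each such $\Gamma$ is vertical, so $\Gamma\cdot F=0$: a general fiber $F$ is numerically equivalent to the fiber containing $\Gamma$ and may be moved off $\Gamma$, equivalently $[F]$ lies in the radical of the negative semidefinite fiberwise intersection form already invoked above. Thus $\lambda(c_1(\Gamma))=\Gamma\cdot F=0$, and by $\mathbb C$-linearity $\lambda$ vanishes on $({\rm Im}\,c_1)\otimes_{\mathbb R}\mathbb C$. Combining the two computations, $\lambda$ vanishes on the whole sum while $\lambda(c_1(H))>0$, proving the claim.

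I expect the only delicate point to be the vanishing $(x\wedge\bar y)|_F=0$: one must make precise that restriction to a fiber commutes with the wedge, that pullbacks of base forms (and their conjugates) restrict to zero on fibers, and that $\lambda(\xi)$ may legitimately be evaluated as the fiber integral $\int_F\xi|_F$ via Poincar\'e duality. Everything else is standard input, namely the semi-negativity of the fiberwise intersection form, which places $[F]$ in its radical, together with the ampleness of $H$ that supplies $H\cdot F>0$.
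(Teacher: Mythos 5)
Your proof is correct and is essentially the paper's argument in dual form: the paper restricts a putative decomposition of $c_1(H)$ to a general fiber $F$ and derives $H\cdot F=0$, while you pair against the class $[F]$, which by fiber integration amounts to the same computation. The three ingredients --- vanishing of pullback forms on fibers, Zariski's lemma for vertical classes, and $H\cdot F>0$ from ampleness --- are identical in both versions.
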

\begin{proof}
Suppose that $c_1(H)=\alpha+\beta\in {\rm Im\,} h+({\rm Im\,}
c_1)\otimes_{\mathbb R}\mathbb C$ for some  $\alpha\in {\rm Im\,}
h$. Let $F$ be a general fiber. For any $\alpha\in {\rm Im\,} h$, by
the definition of $h$, one can see easily that $\alpha|_F=0$. On the
other hand, Zariski's lemma implies $\beta|_F=0$ for any $\beta\in
{\rm Im\,} c_1$. Hence $c_1(H)|_F=0$, i.e.,  $HF=0$, a
contradiction.
\end{proof}

Note that $\alpha_i\wedge \bar{\alpha}_k$ is the pull-back of an
element in $H^{1,1}(C)$  since $h^1(C, \Omega_C)= h^0(C, \sO_C)=1$.
So we can assume that $\alpha_i\wedge
\bar{\alpha}_k=\varepsilon_{ik}\alpha_1\wedge \bar{\alpha}_1$ as
cohomology classes in $H^{1,1}(C)$, where $\varepsilon_{ik}$ are
complex numbers.

\begin{lemma}\label{Claim 2.}  The matrix $E=(\varepsilon_{ik})_{1\leq j,\,k\leq b}$ is invertible.
\end{lemma}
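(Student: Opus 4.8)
The plan is to identify the scalars $\varepsilon_{ik}$ with the Gram matrix of the natural Hermitian pairing on $H^0(C,\Omega_C)$, which is positive definite by Riemann's bilinear relations, and to deduce invertibility of $E$ from positive definiteness. Since $\alpha_1,\dots,\alpha_b$ are by construction pulled back from $C$, I would write $\alpha_i=f^*\tilde\alpha_i$ with $\tilde\alpha_1,\dots,\tilde\alpha_b$ a basis of $H^0(C,\Omega_C)$. As already noted in the excerpt, $H^{1,1}(C)$ is one-dimensional, so every product $\tilde\alpha_i\wedge\overline{\tilde\alpha}_k$ lies on a single line; this is exactly what makes the numbers $\varepsilon_{ik}$ well defined, and (via the injectivity of $f^*$ on $H^2$) the relations on $S$ and on $C$ carry the same content, so the whole computation may be performed downstairs on $C$.

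First I would pin down the generator. Because $\tilde\alpha_1\neq 0$ is holomorphic, $\int_C \tilde\alpha_1\wedge\overline{\tilde\alpha}_1\neq 0$, so $\tilde\alpha_1\wedge\overline{\tilde\alpha}_1$ genuinely generates $H^{1,1}(C)\cong\mathbb C$. Integrating the defining relation $\tilde\alpha_i\wedge\overline{\tilde\alpha}_k=\varepsilon_{ik}\,\tilde\alpha_1\wedge\overline{\tilde\alpha}_1$ over $C$ then gives
\[
\varepsilon_{ik}=\frac{\int_C \tilde\alpha_i\wedge\overline{\tilde\alpha}_k}{\int_C \tilde\alpha_1\wedge\overline{\tilde\alpha}_1}.
\]
Next I would introduce the Hermitian form $H(\omega,\eta)=\sqrt{-1}\int_C \omega\wedge\bar\eta$ on $H^0(C,\Omega_C)$. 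Riemann's bilinear relations state precisely that $H$ is positive definite, so its Gram matrix $G=\bigl(\sqrt{-1}\int_C \tilde\alpha_i\wedge\overline{\tilde\alpha}_k\bigr)_{1\le i,k\le b}$ is a positive-definite Hermitian matrix, in particular invertible.

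Finally I would relate $G$ to $E$. Setting $c=\sqrt{-1}\int_C \tilde\alpha_1\wedge\overline{\tilde\alpha}_1=H(\tilde\alpha_1,\tilde\alpha_1)>0$, the displayed formula yields $G=c\,E$, so that $E=c^{-1}G$ is itself positive definite and therefore invertible, which is the claim. I do not expect a serious obstacle here: the only points demanding care are the bookkeeping that one-dimensionality of $H^{1,1}(C)$ makes each $\varepsilon_{ik}$ well defined and $\tilde\alpha_1\wedge\overline{\tilde\alpha}_1$ a nonzero generator, and the correct invocation of the positivity (the Hodge--Riemann bilinear relations) that forces $G$, and hence $E$, to be nondegenerate.
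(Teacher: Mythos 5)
Your proof is correct and rests on the same key fact as the paper's, namely the positivity of the Hermitian pairing $\sqrt{-1}\int_C\omega\wedge\bar\eta$ on $H^0(C,\Omega_C)$ (the Hodge--Riemann relations). The paper runs the argument as a contradiction --- a kernel vector of $E$ would produce a nonzero holomorphic $1$-form $\omega$ with $\omega\wedge\bar\omega=0$ --- whereas you identify $E$ directly as $c^{-1}$ times the Gram matrix of that pairing; this is only a repackaging of the same idea, though it yields the marginally stronger conclusion that $E$ is positive definite Hermitian.
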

\begin{proof}
Suppose that there is a vector $(\lambda_1,\cdots, \lambda_b)\ne 0$,
such that $E\cdot (\lambda_1,\cdots, \lambda_b)^T=0$, i.e.,
$\sum\limits_k \varepsilon_{ik}\lambda_k=0$ for all $i$. Then
$\sum\limits_k \varepsilon_{ik}\lambda_k\alpha_1\wedge
\bar{\alpha}_1=0$. Namely, one has
$$\sum\limits_k\lambda_k\alpha_i\wedge \bar{\alpha}_k=\alpha_i\wedge\sum\limits_{k}\lambda_k\bar{\alpha}_k=0,\,\, \textrm{ for all } i. $$
Therefore we get $\sum\limits_{k}\bar{\lambda}_k \alpha_k\wedge \sum\limits_{k}\lambda_k\bar{\alpha}_k=0$, that is,
$\sum\limits_{k}\bar{\lambda_k}\alpha_k=0$. So $\lambda_i=0$ for all $i$, a contradiction.
\end{proof}

\begin{lemma}\label{Claim 3.} $h$ is injective.
\end{lemma}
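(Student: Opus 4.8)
The plan is to detect the coefficients by cup-product pairings rather than pointwise, since the closed $(1,1)$-forms $\alpha_i\wedge\overline\theta_j$ and $\overline\alpha_k\wedge\theta_l$ are in general \emph{not} harmonic, so the hypothesis only asserts that the class vanishes, i.e.\ that the corresponding form is exact. Suppose $h\big(\sum_{i,j}c_{ij}\,\alpha_i\otimes\overline\theta_j+\sum_{k,l}d_{kl}\,\overline\alpha_k\otimes\theta_l\big)=0$, and set $\omega=\sum_{i,j}c_{ij}\,\alpha_i\wedge\overline\theta_j+\sum_{k,l}d_{kl}\,\overline\alpha_k\wedge\theta_l$, a closed, hence exact, form. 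Then $\int_S\omega\wedge\Psi=0$ for every closed $2$-form $\Psi$. I will test against the closed $(1,1)$-forms $\overline\alpha_p\wedge\theta_q$ and $\alpha_p\wedge\overline\theta_q$. Because $\alpha_i=f^*\beta_i$ and $\alpha_i\wedge\alpha_p=f^*(\beta_i\wedge\beta_p)=0$ (a $(2,0)$-form on the curve $C$), and likewise $\overline\alpha_k\wedge\overline\alpha_p=0$, the two families decouple: pairing with $\overline\alpha_p\wedge\theta_q$ annihilates the $d$-part, while pairing with $\alpha_p\wedge\overline\theta_q$ annihilates the $c$-part.

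The key computation is that each surviving intersection number factors into a base factor times a fibre factor. After reordering, $\int_S(\alpha_i\wedge\overline\theta_j)\wedge(\overline\alpha_p\wedge\theta_q)=\pm\int_S f^*(\beta_i\wedge\overline\beta_p)\wedge(\theta_q\wedge\overline\theta_j)$. Since $\beta_i\wedge\overline\beta_p$ is a top-degree form on $C$, its class satisfies $f^*[\beta_i\wedge\overline\beta_p]=\big(\int_C\beta_i\wedge\overline\beta_p\big)\,[F]$, where $[F]$ is the class of a fibre; and as $\theta_q\wedge\overline\theta_j$ is closed, $\int_F\theta_q\wedge\overline\theta_j$ depends only on the homology class of the fibre. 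Hence, up to a fixed nonzero constant, the intersection number equals $P_{ip}R_{qj}$, where $P_{ip}=\int_C\beta_i\wedge\overline\beta_p$ and $R_{qj}=\int_F\theta_q\wedge\overline\theta_j$ are the Gram matrices of $V_0$ on $C$ and of the restrictions $\theta_\bullet|_F$ on a general fibre. The vanishing $\int_S\omega\wedge(\overline\alpha_p\wedge\theta_q)=0$ for all $p,q$ then reads $P^{T}CR^{T}=0$, with $C=(c_{ij})$.

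It remains to show that $P$ and $R$ are invertible, after which $C=0$, and the symmetric pairing forces $(d_{kl})=0$, giving injectivity. Invertibility of $P$ is exactly Lemma~\ref{Claim 2.} ($P$ equals $E$ up to the nonzero scalar $\int_C\beta_1\wedge\overline\beta_1$), and it is in any case the Gram matrix of the definite Riemann form on $H^0(C,\Omega_C)$. For $R$ I would first note that $V_0=f^*H^0(\Omega_C)$ is precisely the kernel of the restriction $H^0(\Omega_S)\to H^0(\Omega_{S/C})$, because $f_*\sO_S=\sO_C$ gives $H^0(S,f^*\Omega_C)=H^0(C,\Omega_C)$; hence $V_1$ injects into $H^0(\Omega_{S/C})$ and, for a general fibre, into $H^0(F,\Omega_F)$, so the forms $\theta_1|_F,\dots,\theta_{q_f}|_F$ are linearly independent. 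Thus $R$ is the Gram matrix of these linearly independent forms for the (definite) Riemann form on the fibre, and is invertible. I expect the main obstacle to be precisely this factorisation-plus-nondegeneracy step: recognising that the relevant cup products split as a base pairing times a fibre pairing, and that the fibre pairing is nondegenerate exactly because the relative $1$-forms restrict injectively to a general fibre.
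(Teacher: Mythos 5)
Your proof is correct, and it follows the same overall strategy as the paper's --- test the exact form against the conjugate classes $\bar\alpha_p\wedge\theta_q$ (resp.\ $\alpha_p\wedge\bar\theta_q$), use Stokes together with $\alpha_i\wedge\alpha_p=f^*(\beta_i\wedge\beta_p)=0$ to decouple the two blocks, and then invoke Hodge--Riemann positivity plus the invertibility of the Gram matrix of the $\alpha_i$ (your $P$ is exactly the matrix $E$ of Lemma~\ref{Claim 2.} up to the nonzero scalar $\int_C\beta_1\wedge\bar\beta_1$). Where you diverge is in how the resulting bilinear relations are exploited. The paper forms the auxiliary $1$-form $\omega_k=\sum_{i,j}\bar a_{ij}\bar\varepsilon_{ik}\theta_j$, recognizes the integral as the $L^2$-norm $\int_S(\alpha_1\wedge\omega_k)\wedge\overline{(\alpha_1\wedge\omega_k)}$ of a holomorphic $2$-form, and concludes $\alpha_1\wedge\omega_k=0$, hence $\omega_k\in V_0\cap V_1=0$; you instead factor each intersection number K\"unneth-style as $P_{ip}R_{qj}$ via $\int_S f^*\eta\wedge\psi=\big(\int_C\eta\big)\int_F\psi$ and reduce to the nondegeneracy of the two Gram matrices separately. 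Both routes ultimately rest on the same geometric input, namely that a holomorphic $1$-form killing the general fiber is pulled back from $C$ (in the paper this appears as ``$\alpha_1\wedge\omega_k=0$ implies $\omega_k=f^*\beta_k$''; in yours, as the injectivity of $V_1\to H^0(F,\Omega_F)$). Your factorization is arguably more transparent, since it exhibits the full Gram matrix of ${\rm Im}\,h$ as a tensor product of a base pairing and a fiber pairing. The one step you should justify more carefully is the nondegeneracy of $R$: linear independence of the $\theta_j|_F$ for a \emph{single} general $F$ requires an argument (the degeneracy locus of $V_1\otimes\sO_C\to f_*\omega_{S/C}$ is a proper closed subset), but you can sidestep it entirely by observing that $R_{qj}=\int_F\theta_q\wedge\bar\theta_j$ is independent of the smooth fiber chosen, so any null vector of the (semi-definite) form $R$ would give a $\theta\in V_1$ restricting to zero on \emph{every} smooth fiber, hence lying in $V_0\cap V_1=0$.
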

\begin{proof} Suppose that there is a nonzero element in the kernel of
$h$,
\begin{equation}
\sum\limits_{j=1}^{q_f}\sum\limits_{i=1}^{b}a_{ij}\alpha_i\wedge
\bar{\theta}_j
+\sum\limits_{j=1}^{q_f}\sum\limits_{i=1}^{b}b_{ij}\bar{\alpha}_i\wedge
\theta_j=du.
\end{equation}
 (A Zero cohomology class means an exact form). Note that $d\alpha_k=d\theta_l=0$, $
 \bar\alpha_i\wedge \bar\alpha_k=0$.
    By wedging $\bar{\alpha}_k\wedge
\theta_l$ on both sides, one gets
\begin{equation}\label{eq2.2} \sum\limits_{j=1}^{q_f}\sum\limits_{i=1}^{b}a_{ij}\alpha_i\wedge \bar{\theta}_j\wedge \bar{\alpha}_k\wedge \theta_l=
 d(u\wedge \bar{\alpha}_k\wedge \theta_l).\end{equation}
So
\begin{equation}\sum\limits_{j=1}^{q_f}\sum\limits_{i=1}^{b}a_{ij}\varepsilon_{ik}\alpha_1\wedge\bar{\alpha}_1\wedge\bar{\theta}_j\wedge \theta_l
=d(-u\wedge\bar{\alpha}_k\wedge \theta_l).\end{equation}
 Let
$\omega_k=\sum\limits_{j=1}^{q_f}\sum\limits_{i=1}^{b}\bar{a}_{ij}\bar{\varepsilon}_{ik}
\theta_j$. We have $\alpha_1\wedge\bar{\alpha}_1\wedge
\bar{\omega}_k\wedge \theta_l=d(-u\wedge\bar{\alpha}_k\wedge
\theta_l)$. It implies that
$$\alpha_1\wedge\bar{\alpha}_1\wedge \bar{\omega}_k\wedge\omega_k=d(-u\wedge\alpha_k\wedge\omega_k).$$

By Stokes formula,
$$0=\int_S \alpha_1\wedge\bar{\alpha}_1\wedge \bar{\omega}_k\wedge\omega_k=\int_S
(\alpha_1\wedge \omega_k)\wedge \overline{(\alpha_1\wedge
\omega_k)} \ .$$ So $\alpha_1\wedge\omega_k=0$, i.e.,
$\omega_k=f^*\beta_k$ for some $\beta_k\in H^0(C, \Omega_C)$. Thus
$\omega_k\in V_0\cap V_1$, i.e., for any $k$, $\omega_k=0$.
 Hence
$$\sum\limits_{i}a_{ij}\varepsilon_{ik}=0,\hskip1cm \text{\rm for any } j \ \text{\rm and }  k.$$
Therefore Lemma \ref{Claim 2.} implies that  $a_{ij}=0$ for all $i$
and $j$. Similarly, we have $b_{ij}=0$. It is a contradiction.
\end{proof}

%\oint\oint
\begin{lemma}\label{Claim 4.} ${\rm Im\,} h\cap ({\rm Im\,} c_1)\otimes_{\mathbb R}\mathbb C =0$.
\end{lemma}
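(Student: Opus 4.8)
The plan is to view both ${\rm Im\,} h$ and $({\rm Im\,} c_1)\otimes_{\mathbb R}\mathbb C$ as subspaces of $H^{1,1}(S)$ and to force their intersection to be trivial by combining a cup-product orthogonality with the Hodge index theorem. Orthogonality by itself is not enough, because the intersection form on $H^2(S)$ is indefinite and has isotropic vectors; the real point is to confine ${\rm Im\,} h$ to a subspace on which the form is definite. To arrange this I would fix the ample class $H$ of Lemma~\ref{Claim 1.}, choose a K\"ahler form $\omega_H$ representing $c_1(H)$, and take $V_1$ to be the $L^2$-orthogonal complement of $V_0=f^*H^0(\Omega_C)$ inside $H^0(\Omega_S)$ for the metric attached to $\omega_H$. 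Any complement of $V_0$ is admissible in the earlier lemmas, so this choice is harmless.

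First I would record the orthogonality
$$\int_S \zeta\wedge\eta=0\qquad\text{for all } \zeta\in({\rm Im\,} c_1)\otimes_{\mathbb R}\mathbb C,\ \eta\in{\rm Im\,} h.$$
It suffices to check this on generators: if $\Gamma$ is a component of a fiber and $\eta=\alpha_i\wedge\bar\theta_j$ (or $\bar\alpha_i\wedge\theta_j$), then $\int_S[\Gamma]\wedge\eta=\int_\Gamma\eta|_\Gamma$, and $\eta|_\Gamma=0$ because $\alpha_i=f^*(\,\cdot\,)$ restricts to zero on the vertical curve $\Gamma$ --- exactly the vanishing already used in the proof of Lemma~\ref{Claim 1.}. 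Next I would show that every class in ${\rm Im\,} h$ is primitive with respect to $[H]$: for holomorphic $1$-forms one has $\int_S\omega_H\wedge\alpha_i\wedge\bar\theta_j=c\,(\alpha_i,\theta_j)_{L^2}$ with $c\neq0$, and this vanishes by the orthogonal choice $V_1=V_0^\perp$; conjugating handles the $\bar\alpha_i\wedge\theta_j$ terms. Hence ${\rm Im\,} h\subseteq\{c_1(H)\}^\perp\cap H^{1,1}(S)$.

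Finally, let $\xi\in{\rm Im\,} h\cap\big(({\rm Im\,} c_1)\otimes_{\mathbb R}\mathbb C\big)$. Since ${\rm Im\,} h$ is stable under conjugation, $\bar\xi\in{\rm Im\,} h$, so the orthogonality applied to $\zeta=\xi$ and $\eta=\bar\xi$ gives $\int_S\xi\wedge\bar\xi=0$. Writing $\xi=\mu+i\nu$ with $\mu,\nu$ real $(1,1)$-classes (and primitive, by the previous paragraph), a direct expansion using $\mu\wedge\nu=\nu\wedge\mu$ yields $\int_S\xi\wedge\bar\xi=\int_S\mu^2+\int_S\nu^2$. By the Hodge index theorem the intersection form is negative definite on the space of real primitive $(1,1)$-classes, so $\int_S\mu^2\le0$ and $\int_S\nu^2\le0$; their sum being zero forces $\mu=\nu=0$, i.e. $\xi=0$. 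The main obstacle is precisely this passage from orthogonality to a trivial intersection: definiteness is indispensable, and securing it requires the metric-adapted choice of $V_1$ together with the translation $\int_S\omega_H\wedge(\,\cdot\,)=c\,(\,\cdot\,,\cdot\,)_{L^2}$ between the cup product with $[H]$ and the $L^2$-pairing of holomorphic forms.
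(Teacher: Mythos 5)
Your argument is correct, but it follows a genuinely different route from the paper's. The paper takes an element $x\,c_1(F)+\sum_{i,j}x_{ij}\omega_{ij}\in{\rm Im\,}h$, restricts to the fiber components to kill the $x_{ij}$ via Zariski's lemma (negative definiteness of the intersection matrix of the $\Gamma_{ij}$ with one component per fiber omitted), and then reruns the Stokes/wedging computation from the injectivity proof of Lemma~\ref{Claim 3.} to exclude $c_1(F)\in{\rm Im\,}h$. You instead combine (i) cup-product orthogonality of the two subspaces --- which is the same vanishing $\eta|_\Gamma=0$ used in Lemma~\ref{Claim 1.}, repackaged as $\int_S[\Gamma]\wedge\eta=0$ --- with (ii) primitivity of ${\rm Im\,}h$ with respect to $c_1(H)$, arranged by normalizing $V_1$ to be the orthogonal complement of $V_0$ for the positive definite Hermitian form $(\alpha,\beta)\mapsto i\int_S\omega_H\wedge\alpha\wedge\bar\beta$ on $H^0(\Omega_S)$, so that the Hodge index theorem turns self-orthogonality into vanishing. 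This is sound: the earlier lemmas hold for an arbitrary complement $V_1$ of $V_0$, and the dimension count proving Theorem~\ref{theoremh11} only needs all the lemmas for one common choice. (Do note that ${\rm Im\,}h$ genuinely depends on the choice of $V_1$ --- replacing $\theta_j$ by $\theta_j+\sum_i c_i\alpha_i$ shifts the image by multiples of $\alpha_i\wedge\bar\alpha_k\in\mathbb C\cdot c_1(F)$ --- so your statement is formally tied to your normalization, though that is all the theorem requires.) What your route buys is the elimination of the second Stokes computation in favor of the Hodge--Riemann relations, together with a clear identification of why orthogonality alone is insufficient on an indefinite lattice; what it costs is the metric normalization and the explicit appeal to the Hodge index theorem, whereas the paper only reuses machinery it has already set up for the injectivity lemma.
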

\begin{proof}
Note that ${\rm Im\,} c_1 \subseteq H^{1,1}(S)\cap H^2(S,\mathbb R)$.
Let $\Gamma_{i1}$, $\cdots$, $\Gamma_{i\ell_i}$ be the irreducible components of $F_i$,
%$F_i=\sum\limits_{j=0}^{k_i}F_{ij}$\,,
let $\omega=c_1(F)$ and  let $\omega_{ij}=c_1(\Gamma_{ij})$. Assume
that
\begin{equation}
x\cdot\omega+\sum_{i=1}^s\sum_{j=1}^{\ell_i-1}
x_{ij}\cdot\omega_{ij}=t \in {\rm Im\,} h,  \qquad x, \ x_{ij} \in
\mathbb C.
\end{equation}
 Note that for any component $\Gamma$ in the fibers,
$t|_{\Gamma}=0$ because $\alpha_i$'s are pullback of forms on the
base $C$. Similarly, $\omega|_{\Gamma}=0$.
\begin{equation}
\sum_{i=1}^s\sum_{j=1}^{\ell_i-1}
x_{ij}\cdot\omega_{ij}|_{\Gamma}=0.
\end{equation}
Thus
\begin{equation}
\sum_{i=1}^s\sum_{j=1}^{\ell_i-1}
x_{ij}\cdot\int\omega_{ij}|_{\Gamma}=0.
\end{equation}
Namely,
\begin{equation}
\sum_{i=1}^s\sum_{j=1}^{\ell_i-1} x_{ij}\cdot \Gamma_{ij}\Gamma=0
\hskip0.5cm \textrm{ for any } \Gamma.
\end{equation}
 We know that the intersection matrix of $\Gamma_{ij}$ is
negative definite, for $1\leq i \leq s$ and $1 \leq j \leq
\ell_i-1$\,, we have $x_{ij}=0$. Thus we get that $x\cdot c_1(F)=t
\in {\rm Im\,} h$ for some $x \in \mathbb C$.

If $x\neq 0$, then $c_1(F) \in {\rm Im\,} h$.  Note that
$c_1(F)=c\cdot\alpha_1\wedge\bar\alpha_1\neq 0$, i.e., $c\neq 0$.
 Let
 $$
c_1(F)=\sum\limits_{j=1}^{q_f}\sum\limits_{i=1}^{b}a_{ij}\alpha_i\wedge
\bar{\theta}_j
+\sum\limits_{j=1}^{q_f}\sum\limits_{i=1}^{b}b_{ij}\bar{\alpha}_i\wedge
\theta_j\ ,
 $$
As forms, we have
\begin{equation}
\sum\limits_{j=1}^{q_f}\sum\limits_{i=1}^{b}a_{ij}\alpha_i\wedge
\bar{\theta}_j
+\sum\limits_{j=1}^{q_f}\sum\limits_{i=1}^{b}b_{ij}\bar{\alpha}_i\wedge
\theta_j - c\cdot \alpha_1\wedge\bar\alpha_1=du.
\end{equation}
Similar to the proof of the previous lemma, we get also
\eqref{eq2.2}. Use the same proof as above, we get $a_{ij}=b_{ij}=0$
for any $i$ and $j$. Hence $ -c\cdot\alpha_1\wedge\bar\alpha_1=du $.
Thus as a class, $c_1(F)=c\cdot\alpha_1\wedge\bar\alpha_1=0$, a
contradiction.
% and
%\begin{align*}
%-c\cdot(\alpha_1\wedge\theta_1)\wedge\overline{(\alpha_1\wedge\theta_1)}
% = &
%-c\cdot\alpha_1\wedge\bar\alpha_1\wedge\bar\theta_1\wedge\theta_1
%\\ = & \ du\wedge \bar\theta_1\wedge\theta_1=d(u\wedge
%\bar\theta_1\wedge\theta_1),
%\end{align*}
%thus we have
%$$
%-c\cdot
%\int(\alpha_1\wedge\theta_1)\wedge\overline{(\alpha_1\wedge\theta_1)}=\int
%d(u\wedge \bar\theta_1\wedge\theta_1)=0.
%$$
%Since $c\neq 0$, we know that $\alpha_1\wedge\theta_1=0$ and
%$\theta_1=f^*\beta$ for some holomorphic form $\beta$ on $C$. Hence,
%$\theta_1\in V_0\cap V_1=0$ and $\theta_1=0$,  a contradiction.
 This proves the lemma
\end{proof}

Combing the above lemmas, we have
$$h^{1,1}\geq 1+{\rm dim \,
Im\,}h+{\rm dim\,} \left((\,{\rm Im\,}c_1)\otimes_{\mathbb R}\mathbb C\right).$$ Then we get the desired inequality
\eqref{eqh11}.
We complete the proof of Theorem  \ref{theoremh11}.

\qquad
\section{Proof of Theorem \ref{theoremequality}}
Given a curve $B$ in a surface $X$ (non-zero effective divisor), we
denote by $B_{\rm red}$ the reduced part of  $B$.
% Let $\ell=\ell(B)$ be the number of irreducible components of $B$.
Let $\Gamma_1,\cdots, \Gamma_\ell$ be all irreducible components of
$B$, and $\widetilde{\Gamma}_i\to \Gamma_i$ be the normalization. As
in the introduction, we define
   $$g(B)=\sum\limits_{i=1}^\ell g(\widetilde{\Gamma}_i),\qquad N_B=p_a(B)-p_a(B_{\rm red}).$$

   Let $q\in B$ be a singular point of $B_{\rm red}$,
we denote by $\mu_q(B)$ the Milnor's number of $B_{\rm red}$ at $q$,
and by $m_q(B)$ the multiplicity of $B_{\rm red}$ at $q$. Let
$\mu_B=\sum\limits_{q\in B}\mu_q(B)$, where
  $q$ runs over all singularities of $B_{\rm red}$.

Let $\sigma:\bar S\to S$ be the blowing-up at $q$,  $E$ the
exceptional curve, and $\bar B$ the strict transform of $B$ in $\bar
S$. Assume that $\bar{B}$ intersects $E$ at $r$ points
$q_1,\cdots,q_r$.

\begin{lemma}\label{lemmilnor}
 Assume that $B$ is a reduced curve with a singular point $q\in B$. Then
 \begin{enumerate}
\item[$(1)$] \  $\chi_{\rm top}(B)=2\chi(\mathcal{O}_{B})+\mu_B$.
\item[$(2)$] \   Let $m=m_q(B)$. Then
\begin{align*}
\mu_q(B)&=\sum\limits_{i=1}^{r}\mu_{q_i}(\bar{B})+m(m-1)-(r-1),\\
\mu_q(B)&=\sum\limits_{i=1}^{r}\mu_{q_i}(\bar{B}+E)+(m-1)(m-2)-1,
\end{align*}
\end{enumerate}
 \end{lemma}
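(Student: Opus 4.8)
The plan is to use Milnor's formula
\[
\mu_q(C)=2\delta_q(C)-\beta_q(C)+1
\]
as the common bridge for both parts, where $\delta_q(C)$ denotes the $\delta$-invariant and $\beta_q(C)$ the number of local analytic branches of a reduced curve $C$ at a point $q$. The point is that $\delta_q$ and $\beta_q$ transform in a controlled way under normalization and under blowing-up, whereas $\mu_q$ does not directly; so I would rewrite every occurrence of $\mu$ through this formula, carry out the bookkeeping with $\delta$ and $\beta$, and translate back at the end.

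For (1), I would normalize $\nu\colon\widetilde B\to B$. The short exact sequence $0\to\mathcal O_B\to\nu_*\mathcal O_{\widetilde B}\to\mathcal Q\to 0$, with $\mathcal Q$ a skyscraper sheaf of length $\delta:=\sum_q\delta_q(B)$, gives $\chi(\mathcal O_{\widetilde B})=\chi(\mathcal O_B)+\delta$. On the topological side, $\nu$ is a homeomorphism away from $\mathrm{Sing}(B)$ and has $\beta_q(B)$ points over each singular point $q$, so additivity of $\chi_{\rm top}$ yields $\chi_{\rm top}(\widetilde B)=\chi_{\rm top}(B)+\sum_q(\beta_q(B)-1)$. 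Since $\widetilde B$ is smooth, $\chi_{\rm top}(\widetilde B)=2\chi(\mathcal O_{\widetilde B})$ componentwise. Combining these three identities and substituting $2\delta_q-\beta_q+1=\mu_q$ collapses everything to $\chi_{\rm top}(B)=2\chi(\mathcal O_B)+\mu_B$.

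For (2), the first identity follows from the one-step blow-up formulas $\delta_q(B)=\binom{m}{2}+\sum_{i=1}^r\delta_{q_i}(\bar B)$ (a consequence of Noether's formula $\delta=\sum_p\binom{m_p}{2}$ summed over $q$ and its infinitely near points) together with $\beta_q(B)=\sum_{i=1}^r\beta_{q_i}(\bar B)$ (blowing up separates but does not create or destroy branches). Feeding these into Milnor's formula and using $2\delta_{q_i}-\beta_{q_i}+1=\mu_{q_i}(\bar B)$ produces $\mu_q(B)=\sum_i\mu_{q_i}(\bar B)+m(m-1)-(r-1)$. For the second identity I would compare $\bar B$ with $\bar B+E$ at each $q_i$ through the union formulas $\delta_{q_i}(\bar B+E)=\delta_{q_i}(\bar B)+(\bar B\cdot E)_{q_i}$ and $\beta_{q_i}(\bar B+E)=\beta_{q_i}(\bar B)+1$ (valid since $E$ is smooth), which by Milnor's formula give $\mu_{q_i}(\bar B+E)=\mu_{q_i}(\bar B)+2(\bar B\cdot E)_{q_i}-1$. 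Summing over $i$ and inserting $\sum_i(\bar B\cdot E)_{q_i}=\bar B\cdot E=m$ gives $\sum_i\mu_{q_i}(\bar B)=\sum_i\mu_{q_i}(\bar B+E)-(2m-r)$; substituting this into the first identity replaces the constant $m(m-1)-(r-1)$ by $m(m-1)-(r-1)-(2m-r)=(m-1)(m-2)-1$, which is exactly the second formula.

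The main obstacle is the blow-up behavior of the $\delta$-invariant: establishing $\delta_q(B)=\binom{m}{2}+\sum_i\delta_{q_i}(\bar B)$ cleanly (via Noether's formula or, equivalently, the conductor ideal) and keeping the branch count consistent across the blow-up. The identity $\bar B\cdot E=m$ also needs to be pinned down, namely that the strict transform meets the exceptional curve in $m$ points counted with multiplicity. Once these inputs are in place, both formulas are forced by elementary arithmetic with Milnor's formula, and the only real risk is in the bookkeeping of $r$, $m$, and the local branch numbers $\beta_{q_i}$.
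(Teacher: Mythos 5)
Your proof is correct, but it is not ``the same approach as the paper'' for the simple reason that the paper supplies no proof at all: it states the lemma and refers to \cite{Tan94} (Lemmas~1.1 and 1.3). Your argument is therefore a self-contained replacement, and it holds up. The three inputs you isolate are indeed the only nontrivial ones: Milnor's formula $\mu_q=2\delta_q-\beta_q+1$, the blow-up formula $\delta_q(B)=\frac{m(m-1)}{2}+\sum_{i}\delta_{q_i}(\bar B)$ together with the invariance of the total branch count under blowing up, and the union formula $\delta_{q_i}(\bar B+E)=\delta_{q_i}(\bar B)+(\bar B\cdot E)_{q_i}$, $\beta_{q_i}(\bar B+E)=\beta_{q_i}(\bar B)+1$ with $\sum_i(\bar B\cdot E)_{q_i}=\bar B\cdot E=m$. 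All are classical for reduced curves on a smooth surface. The bookkeeping also checks: in (1), combining $\chi(\mathcal O_{\widetilde B})=\chi(\mathcal O_B)+\sum_q\delta_q$, $\chi_{\rm top}(\widetilde B)=\chi_{\rm top}(B)+\sum_q(\beta_q-1)$ and $\chi_{\rm top}(\widetilde B)=2\chi(\mathcal O_{\widetilde B})$ yields exactly $\chi_{\rm top}(B)=2\chi(\mathcal O_B)+\sum_q(2\delta_q-\beta_q+1)=2\chi(\mathcal O_B)+\mu_B$; in (2), the constant $m(m-1)-(r-1)-(2m-r)=(m-1)(m-2)-1$ comes out right, and the degenerate case where some $q_i$ is a smooth point of $\bar B$ (so $\mu_{q_i}(\bar B)=0$ while $q_i$ is still a node of $\bar B+E$) is handled automatically by the $\delta$--$\beta$ accounting. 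One presentational remark: since the lemma as stated in the paper concerns curves on an arbitrary smooth surface, you should say explicitly that $\mu_q$, $\delta_q$ and $\beta_q$ are local analytic invariants, so the plane-curve statements of Milnor's formula and of the blow-up/union formulas apply verbatim; with that sentence added, the proof is complete.
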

(1) and (2) are proved in  \cite{Tan94} (Lemma~1.1 and  Lemma~1.3.)

\begin{definition}  A {\it partial
resolution} of the singularities of $B$ is a sequence of blowing-ups
$\sigma=\sigma_1\circ\sigma_2\circ \cdots \circ\sigma_r:$ $ \bar X
\to X$
$$
 (\bar
X,\sigma^* B)=(X_r,B_r)\overset{\sigma_r}\longrightarrow X_{r-1}
\overset{\sigma_{r-1}}\longrightarrow \cdots
\overset{\sigma_2}\longrightarrow (X_1,B_1)
\overset{\sigma_1}\longrightarrow (X_0,B_0)=(X,B),
$$
satisfying the following conditions:

(i) \ $B_{r,\textrm{red}}$ has at worst ordinary double points as
its singularities.

(ii) \,$B_i=\sigma_i^*B_{i-1}$ is the total transform of $B_{i-1}$.

\smallskip \noindent Furthermore,  $\sigma$ is called the {\it
minimal partial resolution} of the singularities of $B$ if
\smallskip

(iii) $\sigma_i$ is the blowing-up of $X_{i-1}$ at a singular point
$(B_{i-1,\textrm{red}},p_{i-1})$ which is not an ordinary double
point for any $i\leq r$. We denote by $m_{i+1}$ the multiplicity of $(B_{\rm red, i},p_i)$.
\end{definition}

In what follows, we always assume that the partial resolutions are
minimal, and we denote by $r=r(B)$ the number of blowing-ups in the
minimal resolution and by  $\overline{B}=B_r$.
Note that
$$\alpha(B)=\mu_{\overline{B}}-\ell(\overline{B})+1$$
 is the first Betti's number of
    the dual graph of $\overline{B}$ by Euler's formula. This number is determined uniquely by $B$.

From Lemma \ref{lemmilnor} and a straightforward computation, one
gets the following lemma.
 \begin{lemma}\label{lemNPamu} \begin{enumerate}
\item[$(1)$] $\mu_{\overline{B}}=\mu_B-\sum\limits_{i=1}^{r}(m_i-1)(m_i-2)+r$. \\
\item[$(2)$] $N_{\overline{B}}=N_B+\frac{1}{2}\sum\limits_{i=1}^{r}(m_i-1)(m_i-2)$.\\
\item[$(3)$]
$p_a(\overline{B}_{\rm red})=p_a(B_{\rm
red})-\frac{1}{2}\sum\limits_{i=1}^{r}(m_i-1)(m_i-2)$.
\end{enumerate}
 \end{lemma}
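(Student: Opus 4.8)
The plan is to prove all three identities simultaneously by induction on the number $r=r(B)$ of blow-ups in the minimal partial resolution, reducing everything to the analysis of a single blow-up and then summing the per-step increments. So I would fix one step $\sigma_i\colon X_i\to X_{i-1}$, blowing up a singular point $p$ of $B_{i-1,\mathrm{red}}$ of multiplicity $m=m_i$, with exceptional curve $E$ and strict transform $\bar B_{i-1}$ meeting $E$ in points $q_1,\dots,q_t$ (this local $t$ is the ``$r$'' of Lemma~\ref{lemmilnor}). Since $\sigma_i$ is an isomorphism away from $E$, the reduced part of the total transform $B_i=\sigma_i^*B_{i-1}$ is precisely $\bar B_{i-1}+E$; hence $B_{i,\mathrm{red}}=\bar B_{i-1}+E$, and its singular points are the old singularities of $B_{i-1,\mathrm{red}}$ (carried over isomorphically) together with the points $q_1,\dots,q_t$ on $E$. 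The lemma then follows by tracking how $\mu$, $p_a(\cdot_{\mathrm{red}})$ and $p_a(\cdot)$ change under this single step and summing over $i=1,\dots,r$.

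For (1), I would feed the second identity of Lemma~\ref{lemmilnor}(2) into this decomposition. Because the singularities away from $p$ are untouched, $\mu_{p}(B_{i-1,\mathrm{red}})=\sum_j\mu_{q_j}(\bar B_{i-1}+E)+(m-1)(m-2)-1$ shows that a single blow-up changes the global Milnor number by $-(m-1)(m-2)+1$. Summing over the $r$ steps gives $\mu_{\overline B}=\mu_B-\sum_{i=1}^r(m_i-1)(m_i-2)+r$, which is (1).

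For (3), I would use the surface adjunction estimates for one blow-up: the strict transform satisfies $p_a(\bar B_{i-1})=p_a(B_{i-1,\mathrm{red}})-\tfrac{m(m-1)}2$ and $\bar B_{i-1}\cdot E=m$, while $p_a(E)=0$. Combining these with the genus formula for a sum of curves,
$$p_a(B_{i,\mathrm{red}})=p_a(\bar B_{i-1}+E)=p_a(\bar B_{i-1})+p_a(E)+\bar B_{i-1}\cdot E-1=p_a(B_{i-1,\mathrm{red}})-\tfrac{(m-1)(m-2)}2,$$
and summing over the $r$ steps yields (3). Alternatively one can bypass adjunction and read the same increment off Lemma~\ref{lemmilnor}(1): an Euler-characteristic count gives $\chi_{\mathrm{top}}(B_{i,\mathrm{red}})=\chi_{\mathrm{top}}(B_{i-1,\mathrm{red}})+1$, and inserting this together with (1) into $\chi_{\mathrm{top}}=2\chi(\mathcal O)+\mu=2(1-p_a)+\mu$ reproduces the factor $\tfrac12(m-1)(m-2)$.

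Finally, (2) is immediate from (3) once one notes that the total transform preserves arithmetic genus: from $K_{X_i}=\sigma_i^*K_{X_{i-1}}+E$ and $\sigma_i^*B_{i-1}\cdot E=0$ we get $(\sigma_i^*B_{i-1})^2=B_{i-1}^2$ and $K_{X_i}\cdot\sigma_i^*B_{i-1}=K_{X_{i-1}}\cdot B_{i-1}$, so adjunction gives $p_a(\overline B)=p_a(B)$. Then $N_{\overline B}=p_a(\overline B)-p_a(\overline B_{\mathrm{red}})=p_a(B)-p_a(\overline B_{\mathrm{red}})$, and substituting (3) gives $N_{\overline B}=N_B+\tfrac12\sum_{i=1}^r(m_i-1)(m_i-2)$. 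I expect the only delicate point to be the bookkeeping rather than any real difficulty: one must keep the local number $t$ of points of $\bar B_{i-1}\cap E$ distinct from the global $r=r(B)$, confirm that at each step the relevant multiplicity is that of the reduced curve $B_{i-1,\mathrm{red}}$, and verify that no singularities other than those lying on $E$ are created, so that the local identities of Lemma~\ref{lemmilnor} compose cleanly into the global sums.
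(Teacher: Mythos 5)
Your proof is correct and is essentially the paper's argument: the paper itself only says that the lemma follows ``from Lemma \ref{lemmilnor} and a straightforward computation,'' and your step-by-step induction on the blow-ups --- using the second identity of Lemma \ref{lemmilnor}(2) for the Milnor numbers, the genus drop $m(m-1)/2$ of the strict transform together with $p_a(\bar B+E)=p_a(\bar B)+\bar B\cdot E-1$ for part (3), and the invariance of $p_a$ under total transform for part (2) --- is exactly that computation, with the bookkeeping (reduced versus total transform, local versus global $r$) handled correctly.
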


 For any fiber $F$,  we   define
$e_{F}= \chi_{\rm top}(F_{\rm red})-(2-2g).$
 \begin{corollary}\label{cortopology} \begin{enumerate}
 \item[$(1)$]  $e_F=2N_{F}+\mu_F=2N_{\overline{F}}+\mu_{\overline{F}}-r(F)$.
 \item[$(2)$]  $p_a(\overline
 F_{\rm red})\geq 0$ and $0\leq N_F\leq N_{\overline{F}}\leq g$.
\item[$(3)$]  $\alpha(F)=p_a(\overline{F}_{\rm red})-g(F)\geq 0$ and
   $g=g(F)+N_{\overline{F}}+\alpha(F)$. \end{enumerate}
  \end{corollary}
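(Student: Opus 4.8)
The plan is to deduce all three parts from the two preceding lemmas, together with two standard facts about fibres: that $p_a$ is preserved under total transform, and that the fibres are connected. I begin with (1). Applying Lemma \ref{lemmilnor}(1) to the reduced curve $F_{\rm red}$ (and noting $\mu_{F_{\rm red}}=\mu_F$ by definition) gives $\chi_{\rm top}(F_{\rm red})=2\chi(\mathcal O_{F_{\rm red}})+\mu_F$. Since $p_a(F)=g$, the definition of $N_F$ yields $p_a(F_{\rm red})=g-N_F$, hence $\chi(\mathcal O_{F_{\rm red}})=1-g+N_F$ and therefore
\[ e_F=\chi_{\rm top}(F_{\rm red})-(2-2g)=2N_F+\mu_F. \]
For the second equality I substitute the expressions for $\mu_{\overline F}$ and $N_{\overline F}$ from Lemma \ref{lemNPamu}(1) and (2); the two correction terms built from $\sum_i(m_i-1)(m_i-2)$ cancel and leave $2N_F+\mu_F=2N_{\overline F}+\mu_{\overline F}-r(F)$.

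Next I treat (3), which I will establish before the inequalities of (2). Because $\overline F_{\rm red}$ has only ordinary double points, I apply the genus formula for a connected nodal curve: with $\ell(\overline F)$ components, total geometric genus $g(\overline F)$, and $\delta$ nodes, one has $p_a(\overline F_{\rm red})=g(\overline F)+\delta-\ell(\overline F)+1$. Here $\delta=\mu_{\overline F}$, since each node has Milnor number $1$, and $g(\overline F)=g(F)$, since blowing up introduces only rational exceptional components and leaves the normalizations of the remaining components unchanged. Thus $p_a(\overline F_{\rm red})=g(F)+\alpha(F)$, which is (3a); and $\alpha(F)=\mu_{\overline F}-\ell(\overline F)+1\ge 0$ because it is the first Betti number of the (connected) dual graph of $\overline F_{\rm red}$. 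For the identity $g=g(F)+N_{\overline F}+\alpha(F)$ I use that $p_a$ is invariant under total transform: a single blow-up at a point with exceptional curve $E$ satisfies $(\sigma^*F)^2=F^2$ and $\sigma^*F\cdot K_{\bar S}=F\cdot K_S$, so by adjunction $p_a(\overline F)=p_a(F)=g$; combined with $N_{\overline F}=p_a(\overline F)-p_a(\overline F_{\rm red})=g-p_a(\overline F_{\rm red})$ and (3a), this gives the claimed relation.

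With (3) in hand, most of (2) is immediate. From (3a), $p_a(\overline F_{\rm red})=g(F)+\alpha(F)\ge 0$. From the relation just proved, $N_{\overline F}=g-g(F)-\alpha(F)\le g$, as $g(F)\ge 0$ and $\alpha(F)\ge 0$. The inequality $N_F\le N_{\overline F}$ is Lemma \ref{lemNPamu}(2), since each multiplicity $m_i$ of a blown-up non-nodal point is at least $2$, making $(m_i-1)(m_i-2)\ge 0$. The one genuine inequality is $N_F\ge 0$, which I regard as the main point; I will prove it cohomologically. The exact sequence $0\to\mathcal N\to\mathcal O_F\to\mathcal O_{F_{\rm red}}\to 0$, with $\mathcal N$ the nilradical, gives $N_F=\chi(\mathcal O_{F_{\rm red}})-\chi(\mathcal O_F)=-\chi(\mathcal N)=h^1(\mathcal N)-h^0(\mathcal N)$. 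Since the fibres of $f$ are connected, $H^0(F,\mathcal O_F)=\mathbb C$ contains no nilpotents, so $h^0(\mathcal N)=0$ and $N_F=h^1(\mathcal N)\ge 0$.

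The main obstacle is exactly this last step: justifying $h^0(\mathcal O_F)=1$ (equivalently, that $F$ carries no nonconstant global regular function) for a possibly non-reduced fibre on a surface that need not be relatively minimal. This follows from $f_*\mathcal O_S=\mathcal O_C$ via cohomology and base change, and I should check it does not break down at multiple fibres, where it rests on the normal bundle of the reduced fibre being torsion of order equal to the multiplicity. Everything else reduces to the bookkeeping already packaged in Lemmas \ref{lemmilnor} and \ref{lemNPamu}.
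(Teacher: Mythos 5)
Your proof is correct and follows essentially the same route as the paper: part (1) and the bookkeeping in (2)--(3) come from Lemmas \ref{lemmilnor} and \ref{lemNPamu}, and part (3) together with $p_a(\overline{F}_{\rm red})\geq 0$ comes from the genus formula for the connected nodal curve $\overline{F}_{\rm red}$, which is exactly the content of the paper's (very terse) argument. The one place you go beyond the paper is the cohomological proof of $N_F\geq 0$ via $h^0(\mathcal O_F)=1$; the paper treats this as immediate, but since $N_B\geq 0$ fails for general non-reduced effective divisors (e.g.\ $B=2E$ with $E$ a $(-1)$-curve gives $N_B=-2$), your identification of this as the one step genuinely using that $F$ is a fiber, and the argument you supply for it, are a worthwhile addition.
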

  \begin{proof}
It follows from Lemma  \ref{lemmilnor} (1) and Lemma \ref{lemNPamu}
immediately. Note that $\overline
 F_{\rm red}=\Gamma_1+\cdots+\Gamma_\ell$ is a connected nodal
 curve, $p_a(\Gamma_i)=g(\Gamma_i)+\alpha(\Gamma_i)$ and $$p_a(\overline
 F_{\rm
 red})=p_a(\Gamma_1)+\cdots+p_a(\Gamma_\ell)+\sum_{i<j}\Gamma_i\Gamma_j-\ell+1\geq
 0.$$
 $\alpha(F)=\alpha(\Gamma_1)+\cdots+\alpha(\Gamma_\ell)+\sum_{i<j}\Gamma_i\Gamma_j-\ell+1\geq
 0.$
  \end{proof}

 Let $\omega_{S/C}:=\omega_{S}\otimes (f_*\omega_C)^{\vee}$ be the relative canonical sheaf.
  The relative invariants of $f$ are defined as follows.
$$\chi_f:=\deg f_{*}\omega_{S/C},\qquad K_f^2:=\omega_{S/C}\cdot\omega_{S/C},\qquad  e_f:=\sum\limits_{F}e_{F}.$$
It is well known that
\begin{align*}\begin{cases}
K_f^2 =c_1^2(S)-8(g-1)(g(C)-1), &\\
\chi_f=\chi(\mathcal{O}_S)-(g-1)(g(C)-1),&\\
e_f= c_2(S)-4(g-1)(g(C)-1).&
\end{cases}
\end{align*}
By the definitions of $e_f, \chi_f$ and Hodge theory, one has
\begin{align*}
e_f&=2-4q+2p_g+h^{1,1}(S)-4(g-1)(b-1),\\
\chi_f &=1-q+p_g-(g-1)(b-1).
\end{align*}
Thus $ 2\chi_f-e_f=2q+2(g-1)(b-1)-h^{1,1}(S), $ i.e.,
\begin{align}\label{eq2chifef}
2\chi_f&=e_f+(g-q_f)(2b-2)+2q_fb+2-h^{1,1}(S).
\end{align}

On the other hand, from Corollary \ref{cortopology} and the fact
that $r(F_i)=\ell(\overline{F}_i)-\ell(F_i)$, we have
\begin{align}\label{eqef}
e_f&=\sum_{i=1}^se_{F_i} =\sum_{i=1}^s(2N_{\overline
F_i}+\mu_{\overline F_i}-r(F_i))=\sum_{i=1}^s(2N_{\overline
F_i}+\mu_{\overline F_i}-\ell(\overline F_i)+\ell(F_i)) \\ \notag &=
\sum_{i=1}^sN_{\overline F_i}+\sum_{i=1}^s(g-p_a(\overline F_{i,\rm
red})+\alpha(F_i) ) +\sum_{i=1}^s(\ell(F_i)-1) \\ \notag
&=\sum_{i=1}^sN_{\overline F_i}+ \sum_{i=1}^{s_1}(g-g(F_i))
+\sum_{i=1}^s(\ell(F_i)-1)         \\ \notag &=
\sum_{i=1}^{s}N_{\overline{F}_i}-\sum_{i=1}^{s_1}(g(F_i)-q_f)+\sum_{i=1}^{s}(\ell(F_i)-1)+(g-q_f)s_1.
\end{align}
Substitute    \eqref{eqef}  into   \eqref{eq2chifef}, we obtain the
equality in Theorem \ref{theoremequality}.

\qquad

\section{Applications}

The following lemma is due to Beauville (\cite{Bea81}, Lemma 1). The
original proof for the case when $C=\mathbb P^1$ works for the
general case. For the reader's convenience, we would like to recall
Beauville's proof.

\begin{lemma} \label{lemq>qf} {\rm (\cite{Bea81}) \ }
$g(F_i)\geq q_f$.
\end{lemma}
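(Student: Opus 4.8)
The plan is to reduce the lemma to an injectivity statement for holomorphic one-forms and then read off that injectivity from the family of Jacobians. I keep the decomposition $H^0(\Omega_S)=V_0\oplus V_1$ of Section~\ref{Secaraeqn}, with $V_0=f^*H^0(\Omega_C)$ and $\dim V_1=q_f$. Write $F=F_i$, let $\Gamma_1,\dots,\Gamma_k$ be its irreducible components and $n_j:\tilde\Gamma_j\to\Gamma_j\hookrightarrow S$ the normalizations. Since each $\Gamma_j$ is contracted by $f$, every form in $V_0$ pulls back to zero on every $\tilde\Gamma_j$, so restriction-and-pullback defines
\[
\rho_F:V_1\longrightarrow\bigoplus_{j=1}^{k}H^0(\tilde\Gamma_j,\Omega^1_{\tilde\Gamma_j}),\qquad \rho_F(\theta)=(n_j^*\theta)_j .
\]
The target has dimension $\sum_j g(\tilde\Gamma_j)=g(F)$, so once $\rho_F$ is shown to be injective I obtain $q_f=\dim V_1\le g(F)$, which is exactly the claim.

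To prove injectivity I would pass to the Albanese map $a:S\to\mathrm{Alb}(S)$. The fibration $f$ induces a surjection $\mathrm{Alb}(S)\to\mathrm{Alb}(C)$; let $A_0$ be the connected component of its kernel, so $\dim A_0=q(S)-b=q_f$, and under $a^*$ the invariant forms $H^0(A_0,\Omega^1)$ are identified with the quotient $H^0(\Omega_S)/V_0\cong V_1$. Because $f(F)$ is a single point, $a$ carries $F$ into one coset of $A_0$; after translating I get $a_F:F\to A_0$ with $\rho_F(\theta)=(n_j^*a_F^*\theta_{A_0})_j$, where $\theta_{A_0}$ is the invariant one-form on $A_0$ matching $\theta$. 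Thus a nonzero element of $\ker\rho_F$ is precisely a nonzero invariant one-form on $A_0$ that vanishes on $a_F(F)$, i.e.\ it forces $a_F(F)$ into a translate of a proper abelian subvariety $B\subsetneq A_0$.

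The hard part is ruling this out for the (possibly singular) fiber $F$, and this is where the single-fiber hypothesis really bites: a naive attempt to integrate $\theta\wedge\bar\theta$ over $F$ cannot work, since a section of $f_*\omega_{S/C}$ may vanish at an isolated point of $C$ without being zero. The correct input is that $A_0$ is the \emph{fixed part} of the family of Jacobians $\mathcal J\to C$: it is a constant abelian subvariety, hence it extends over all of $C$ and specializes at $t_0=f(F)$ to an abelian subvariety $K\cong A_0$ of the connected component $\mathrm{Pic}^0(F)$, the generalized Jacobian of $F$. That generalized Jacobian sits in an extension $0\to L\to\mathrm{Pic}^0(F)\to\prod_j\mathrm{Jac}(\tilde\Gamma_j)\to 0$ with $L$ linear, hence containing no nontrivial compact subgroup; since $K$ is an abelian variety, $K\cap L$ is finite and $K$ maps with finite kernel into $\prod_j\mathrm{Jac}(\tilde\Gamma_j)$, giving $q_f=\dim K\le\dim\prod_j\mathrm{Jac}(\tilde\Gamma_j)=g(F)$. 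Translated back to forms this says exactly that $a_F(F)$ generates $A_0$, so $\rho_F$ is injective. I expect the one genuine subtlety to be the justification that the fixed abelian part extends across $t_0$ and specializes into an abelian subvariety of $\mathrm{Pic}^0(F)$ rather than degenerating into the linear part $L$; this rigidity of the fixed part (equivalently, the constancy of the subtorus generated by $a_t(F_t)\subseteq A_0$ as $t$ varies over the connected base $C$) is precisely what lets Beauville's $C=\mathbb P^1$ argument carry over verbatim to an arbitrary base curve.
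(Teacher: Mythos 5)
Your reduction of the lemma to the injectivity of the restriction map $\rho_F:V_1\to\bigoplus_j H^0(\tilde\Gamma_j,\Omega^1_{\tilde\Gamma_j})$ is correct, and so is the translation of that injectivity into the statement that $a_F(F)$ generates the $q_f$-dimensional abelian variety $A_0=\ker^0({\rm Alb}(S)\to J(C))$. But the proof stops exactly where the real work begins: the assertion that the constant (fixed) abelian part of the family of Jacobians ``extends over all of $C$ and specializes at $t_0$ to an abelian subvariety $K\cong A_0$ of ${\rm Pic}^0(F)$'' without dropping dimension is precisely the content of the lemma, and you flag it yourself as unjustified. Making it rigorous requires the N\'eron mapping property plus an argument that the specialization homomorphism $K\to{\rm Pic}^0(F)$ has finite kernel, and for a non-semistable or non-reduced fiber (which the lemma allows) the relation between the N\'eron model of the Jacobian and ${\rm Pic}^0(F)$ is itself delicate, with unipotent pieces beyond the extension $0\to L\to{\rm Pic}^0(F)\to\prod_j{\rm Jac}(\tilde\Gamma_j)\to 0$ you quote. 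So as written there is a genuine gap, and the machinery you invoke to fill it is both heavier than needed and not actually deployed.

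The paper (following Beauville) closes this gap with a short, elementary argument that you should compare with your sketch: take the normalization $\widetilde F\to F$, let $\beta:J(\widetilde F)\to{\rm Alb}(S)$ be the induced map, and set $Q={\rm Alb}(S)/{\rm Im}\,\beta$. The composite $S\to Q$ contracts $F$ by construction, hence by the rigidity theorem contracts \emph{every} fiber and factors through $C$; since $S$ generates ${\rm Alb}(S)$, this yields a surjection $J(C)\to Q$, whence $\dim Q\le b$ and $\dim{\rm Im}\,\beta\ge q-b=q_f$, so $g(F)=\dim J(\widetilde F)\ge q_f$. This is exactly the statement ``$J(\widetilde F)$ surjects onto $A_0$'' that your $\rho_F$-injectivity needs, obtained with no reference to N\'eron models or to the behaviour of the fixed part under degeneration; the passage from $C=\BP^1$ to arbitrary $C$ is handled by the surjection $J(C)\to Q$, not by any specialization statement. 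If you want to keep your formulation via $\rho_F$, graft the rigidity argument in at the point where you currently appeal to the specialization of the fixed part.
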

\begin{proof}
Let $\widetilde{F}$ be the normalization of $F$, and $\beta:
J(\widetilde{F})\to {\rm Alb}(S)$ be the natural map between the
jacobian $J(\widetilde{F})$ and the Albanese variety ${\rm Alb}(S)$.
Considering the abelian variety  $Q={\rm Alb}(S)/{\rm Im}\beta$, one
gets
 an  induced map $\bar{\alpha}:S\to Q$.   $\bar{\alpha}(F)$ is a point in $Q$ since $J(\widetilde{F})\to Q$ is zero.
 Therefore,  by the rigidity theorem, $\bar{\alpha}$ contracts all fibers of $f$.
So  $\bar{\alpha}$ has a  factorization through $f$.
\begin{equation*}
\raisebox{15mm}{\xymatrix{
S\ar[r]^{\bar{\alpha}}\ar[d]_{f} & Q   \\
C\ar[ur] & \\
%& & \mbox{Figure 2} &
}}
\end{equation*}
Since the image of $S$ in Alb$(S)$ generates Alb$(S)$, we see that
the image of $C$ in $Q$ generates $Q$. Thus we get a surjective map
$v:J(C)\to Q$.

We have the  following commutative diagram.
\begin{equation*}
\raisebox{15mm}{\xymatrix{
\widetilde{F}\ar[r]\ar[d]& J(\widetilde{F})\ar[d]_{\beta}\ar[dr] &\\
S\ar[r]^{\alpha}\ar[d]_{f}  & {\rm Alb}(S) \ar[r]\ar[d]^g & Q   \\
C\ar[r]_{j} &J(C)\ar[ur]_{v}&\\
%& & \mbox{Figure 2} &
}}
\end{equation*}
If $g(C)=0$, then  $Q$ is zero. Namely, $J(\widetilde{F})\to {\rm
Alb}(S)$ is surjective. Hence $g(F)=\dim J(\widetilde{F})\geq \dim
{\rm Alb}(S)=q$. If $g(C)>0$, then
 $v: J(C)\to  Q$ is surjective.
Thus
$$g(C)=\dim J(C)\geq \dim Q=q-\dim {\rm Im\,}\beta,$$
i.e., $\dim {\rm Im\,} \beta \geq q_f$. So
 $g(F)=\dim J(\widetilde{F})\geq \dim {\rm Im\,}\beta\geq q_f$.
\end{proof}

%\begin{proposition}[Theorem \ref{theoremAras1}]
%Let $f:S\to C$ be a relatively minimal semistable fibration. Let $s_1$ be the number of fibers with singular Jacobian.
%Then we have
%\begin{align*}
% \deg f_*\omega_{S/C}\leq \frac{g-q_f}{2}(2b-2+s_1).
%\end{align*}
%\end{proposition}
%\begin{proof}
%In this case, $F=\overline{F}$, $N_{F}=0$ and $g=g(F)+\alpha(F)$. \eqref{eqarasemi}  follows from
%Theorem \ref{theoremh11},  Theorem \ref{theoremequality} and Lemma \ref{lemq>qf}.
%\end{proof}

Similarly, from Corollary \ref{cortopology} and Lemma \ref{lemq>qf},  we get Corollary \ref{theoremAra2}.

\bigskip \noindent
 {\it Proof of Corollary \ref{corPgq1s14}:}
\bigskip

From \eqref{pg=} and the inequalities, we only need to prove that
$q\leq 1$.

Suppose that $q(S)\geq 2$. Because $p_g(S)=0$, $S$ is a ruled
surface. The Albanese map $\alpha: S\to \text{\rm Alb}(S)$ induces
the $\mathbb P^1$-fibration, and $B={\rm Im}\, \alpha$ is a curve of
genus $q$.

Because $g(F_i)=q\geq 2$, at least one irreducible component of
$\overline{F}_i$, say $\Gamma_1$,
 doesn't lie in the fibers of $\alpha$. So $g(\Gamma_1)\geq q=g(B)$ by Hurwitz formula. Thus
 $g(\Gamma_1)=q$ and $\Gamma_1$ is a section of $\alpha:S\to B$.  It implies $\Gamma_1$ is the unique horizonal
  irreducible component of $F_i$ since $g(F_i)=q$.  Since $F_i$ is semistable, $F_i=\Gamma_1+$ component contracted by $\alpha$.
  Let $F'$ (resp. $F$) be a general fiber of $\alpha$ (resp. $f$).
  One has $F'F_i=F'\Gamma_1=1$, and hence $F'F=1$. So $F\cong B$. Therefore $f$ is isotrivial. Since $f:S\to\mathbb P^1$
  is semistable, $f$ must be trivial, a contradiction. This
  completes the proof of Corollary \ref{corPgq1s14}.   \hfill\qed

\qquad

\section{Examples}

We will construct two semistable families $f:S\to\mathbb P^1$ with
$s_1=4$, $g=2$ and $q(S)=1$.

\begin{example}[\cite{Xia85}, Example~4.7]
Take six lines in $\BP^2$ as follows.
\begin{align*}
P_1:&\ X=0, & P_2: & \ X-Y=0,  & P_3: & \ Y=0,\\
Q_1: & \ 2X+Y-Z=0, & Q_2: &\  X+Y-Z=0,& Q_3:& \ X+2Y-Z=0.
\end{align*}
%\beziermacro{\on}
%\reduce{\on}
%\snapping{\off}
%\quality{8.00}
%\graddiff{0.01}
%\snapasp{1}
%\zoom{4.0000}
\unitlength 1mm % = 2.85pt
\linethickness{1.2pt}
\ifx\plotpoint\undefined\newsavebox{\plotpoint}\fi % GNUPLOT compatibility
\begin{picture}(191.75,58)(95,0)
%\thicklines
%\emline(143.75,15.25)(191.75,15.25)
\put(143.75,15.25){\line(1,0){48}}
%\end
%\emline(147.25,11.25)(171.25,54)
\multiput(147.25,11.25)(.0337078652,.0600421348){712}{\line(0,1){.0600421348}}
%\end
%\emline(143,11)(183,36.75)
\multiput(143,11)(.0523560209,.0337041885){764}{\line(1,0){.0523560209}}
%\end
%\emline(165.75,53.75)(188.25,13.25)
\multiput(165.75,53.75)(.0337331334,-.0607196402){667}{\line(0,-1){.0607196402}}
%\end
%\emline(191,12.75)(151.75,37.5)
\multiput(191,12.75)(-.0534741144,.033719346){734}{\line(-1,0){.0534741144}}
%\end
%\emline(168.5,58)(168.5,15)
\put(168.5,54){\line(0,-1){43}}
%\end
%\emline(168.5,16.5)(168.5,11.25)
\put(168.5,16.5){\line(0,-1){5.25}}
%\end
\put(170,48){$z_1$} \put(145.75,16.5){$z_2$} \put(187.5,16.5){$z_3$}
%\put(159,12.){$l_1$} \put(181.75,26.25){$l_2$} \put(161,42){$l_3$}
%\put(183.75,36){$l_5$} \put(150.25,38.5){$l_6$} \put(169,8){$l_4$}
\put(169.5,30){$x$}
\end{picture}
This configration of 6 lines has 4 triple points $x$, $z_1$, $z_2$,
$z_3$, and 3 double points $y_1$, $y_2$ and $y_3$.  Their
coordinates are as follows.  $x= [0,0,1]$ and
\begin{align*}
y_1 =&[{1}/{2},0,1], & y_2 =&[{1}/{2},{1}/{2},1], &y_3=&[0, {1}/{2},1],\\
z_1 =&[1,0,1], & z_2=&[{1}/{3},{1}/{3},1], &z_3=&[0, 1,1]
\end{align*}

By Bertini's theorem, one can find an irreducible and reduced curve
$D$ of degree $4$ in $\BP^2$ satisfying the following conditions.
 \begin{enumerate}
\item[(1)] $D$ has ordinary  double points at $y_1$, $y_2$ and
$y_3$, and no other singular points.
\item[(2)] $D$ passes through $z_1$, $z_2$, $z_3$ and $x$.
\end{enumerate}

By blowing-up $\BP^2$ at $x$, we get a ruled surface $\varphi:P\to
\BP^1$. Thus we can construct a double cover $\pi:X\to P$ branched
along the curve $R=D+P_1+P_2+P_3+Q_1+Q_2+Q_3$. The double cover
gives us a semistable fibration $f:S\to \BP^1$ of genus $2$.

By a straightforward computation, we see that $f$ is a Lefschetz
pencil.  Furthermore, we have $K_S^2=-3$, $p_g(S)=0$, $q(S)=1$. Thus
$K_f^2=5$, $\chi_f=1$, $e_f=7$. So $f$ admits 7 singular fibers.
$K_f^2-2\chi_f=3$, this means that 3 singular fibers are not
irreducible, so $s_1=7-3=4$.
$$s_1=4, \hskip0.5cm s=7, \hskip0.5cm p_g=0, \hskip0.5cm q=1,  \hskip0.5cm h^{1,1}=5.$$
\end{example}

\begin{example}
Let ${\rm pr}_i:\BP^1\times \BP^1\to\mathbb P^1$ be the $i$-th
projection and $F_i^0$ be a general fiber of ${\rm pr}_i$  ($i=1,\, 2$). Let $B_0$ be a smooth irreducible curve of type $(2,1)$, i.e.,
$B_0\equiv 2F_1^0+F_2^0$.

\begin{center}
\begin{picture}(50,125)(30, -45)
\multiput(0,-15)(15,0){6}{\line(0,1){90}}
\multiput(-15,60)(0,-15){4}{\line(1,0){120}}
\multiput(-15, 0)(3,0){40}{\line(1,0){2}}\multiput(90, -15)(0,3){30}{\line(0,1){2}}
\put(135, 0){\line(0,1){75}}\put(0,-35){\line(1,0){90}}
\put(45,-20){\vector(0,-1){7.5}}\put(120,30){\vector(1,0){7.5}}

\put(-30, 15){{\tiny $\Gamma_2^{0}$}}\put(-30,30){{\tiny $\Gamma_4^{0}$}}\put(-30, 45){{\tiny $\Gamma_3^{0}$}}
\put(-30, 60){{\tiny $\Gamma_1^{0}$}}\put(100, -35){{\tiny $\mathbb{P}^1$}}
\put(135, -15){{\tiny $\mathbb{P}^1$}}\put(50, -25){{\tiny ${\rm pr}_1$}}
\put(120, 20){{\tiny ${\rm pr}_2$}}\put(90, -22.5){{\tiny $F_1^0$}}\put(107.5,  -2.5){{\tiny $F_2^0$}}
\put(80, 23){{\tiny $B_0$}}
\thicklines
\put(30,67.5){\oval(15,15)[b]}\put(75,22.5){\oval(15, 15)[b]}
\multiput(7.5,22.5)(15,15){2}{\line(-1,1){15}}\multiput(52.5,22.5)(-15,15){2}{\line(1, 1){15}}
 \end{picture}
\end{center}

There are two fibers $\Gamma_1^0$ and
$\Gamma_2^0$ of ${\rm pr}_2$ tangent to $B_0$.
 By choosing two general fibers $\Gamma_3^0$ and  $\Gamma_4^0$ of ${\rm
 pr}_2$,
 one can construct a double cover  $\pi_0:E\times\BP^1\to
\BP^1\times \BP^1$ ramified over
$B_{\pi_0}=\sum\limits_{i=1}^4\Gamma_i^{0}$, where $E$ is
 the pullback elliptic curve of a general fiber of ${\rm pr}_1$.

Let
$$B_{\pi}:=\pi_0^*B_0 \equiv 2F_1+2F_2,\qquad \pi_0^*\Gamma_i^0=2\Gamma_i,\qquad  L=F_1+\Gamma_3, $$
where $F_i$ are   general fibers of
the $i$-th projection of $E\times \BP^1$. Thus $B\equiv 2L$. We can construct a double cover $\pi:S_0\to E\times
 \BP^1$ branched along $B_\pi$. Thus we get a fibration $f:S\to \BP^1$ of genus $2$.

  Now we claim that $f$ has six singular fibers, and four of them have non-compact Jacobians.
Let $E_1$ and $E_2$ be the elliptic fibers of  ${\rm pr}_1:
E\times\BP^1\to\BP^1$ such that the image fiber $\pi_0(E_i)$ passes
through the tangent point $B_0\cap \Gamma_i^0$ ($i=1,2$). Let
$E_3,E_4$ (resp.  $E_5,E_6$) are other elliptic fibers whose  image
fibers  pass through the intersection points  of $B_{0}\cap
\Gamma_3^0$ (resp. $B_{0}\cap \Gamma_4^0$). Take $p_i=E_i\cap
\Gamma_i$, $p_{i+2}=E_{i+2}\cap \Gamma_3$, $p_{i+4}=E_{i+4}\cap
\Gamma_4$ ($i=1,2$) and $q_j=E_j\cap \Gamma_3$ ($j=1,\cdots,6$).

 \begin{center}
\begin{picture}(50,135)(30, -45)
\multiput(0,-15)(15,0){6}{\line(0,1){90}}
\multiput(-15,60)(0,-15){4}{\line(1,0){120}}
\multiput(-15, 0)(3,0){40}{\line(1,0){2}}\multiput(90, -15)(0,3){30}{\line(0,1){2}}
\put(135, 0){\line(0,1){75}}\put(0,-35){\line(1,0){90}}
\put(45,-20){\vector(0,-1){7.5}}\put(120,30){\vector(1,0){7.5}}

\put(-30, 15){\tiny $\Gamma_2$}\put(-30,30){\tiny $\Gamma_4$}\put(-30, 45){\tiny $\Gamma_3$}
\put(-30, 60){\tiny $\Gamma_1$}\put(100, -35){\tiny $\mathbb{P}^1$}
\put(135, -15){\tiny $E$}\put(50, -25){\tiny ${\rm pr}_1$}
\put(120, 20){\tiny ${\rm pr}_2$}\put(90, -22.5){\tiny $F_1$}\put(107.5,  -2.5){\tiny $F_2$}
\put(79, 22){\tiny $B_{\pi}$}
\put(-3.5, 80){\tiny $E_5$}\put(11.5, 80){\tiny $E_3$}\put(26.5, 80){\tiny $E_1$}
\put(41.5, 80){\tiny $E_4$}\put(56.5, 80){\tiny $E_6$}\put(71.5, 80){\tiny $E_2$}
\put(1.5,25){\tiny $p_5$}\put(-8,47){\tiny $q_5$}\put(16,40){\tiny $p_3$}\put(22,48){\tiny $q_1$}
\put(36,40){\tiny $p_4$}\put(61,47){\tiny $q_6$}\put(76,47){\tiny $q_2$}\put(51,25){\tiny $p_6$}
\put(35,56){\tiny $p_1$}\put(80,10){\tiny $p_2$}
\thicklines
\multiput(34,56)(45,-45){2}{\line(-1,1){10}}\multiput(26,56)(45,-45){2}{\line(1,1){10}}
\multiput(-7.5,30)(15,15){2}{\oval(15,15)[r]}\multiput(67.5,30)(-15,15){2}{\oval(15,15)[l]}
 \end{picture}
\end{center}

  Since $p_i\ne q_i$  and
  $$2p_i=B_{\pi}\mid_{E_i}\equiv 2L|_{E_i}=2q_i,\qquad i=1,2,$$
  $\pi^{-1}(E_i)$ is irreducible ($i=1,2$). So the fiber $\widetilde{F}_i$ of $f$ corresponding to $\pi^{-1}(E_i)$
 ($i=1,2$)  can be written as $\widetilde{F}_i=C_1+C_2$ with $C_1C_2=2$, where $C_1$ is a smooth elliptic curve and
    $C_2$ is a $(-2)$-curve.

  $p_3=q_3$ (resp. $p_4=q_4$) implies that $\pi^{-1}E_3$ (resp. $\pi^{-1}E_4$)
  is  reducible. So  the corresponding fiber $\widetilde{F}_3$ (resp. $\widetilde{F}_4$) of $f$ is a nodal curve
  $C_1+C_2$ with $C_1C_2=1$ where $C_1, C_2$ are smooth  elliptic curves.

Similarly, one can check that $\pi^{-1}E_5$ and $\pi^{-1}E_6$ are
irreducible. Thus the corresponding fiber $\widetilde{F}_5$
 and $\widetilde{F}_6$ are singular elliptic curves with only one node.

By a straightforward computation, one has
$$s_1=4, \hskip0.5cm s=6, \hskip0.5cm p_g=0, \hskip0.5cm q=1,  \hskip0.5cm h^{1,1}=6.$$
\end{example}
\begin{remark}
The third author proves in \cite{Yu00} that if $s_1=4$ and $s=5$, then
$g=2$.
\end{remark}

\bigskip\noindent
\textbf{Acknowledgements:} The authors would like to thank Dr. Xin L\"u for
pointing out an error in the original proof of Lemma \ref{Claim 4.}.

\qquad

\end{document}